\documentclass[11pt]{article}
\usepackage{amsmath,amsthm,amssymb,mathtools,mathrsfs,bbm}
\usepackage{mdframed}
\usepackage[hidelinks]{hyperref}
\usepackage{float}
\usepackage{algorithm2e}
\usepackage[noblocks]{authblk}
\usepackage{typearea}
\usepackage{natbib}
\usepackage{geometry}

\newgeometry{textwidth=16cm, textheight=25cm}

\numberwithin{equation}{section}

\newcommand{\PP}{\mathbb{P}}
\newcommand{\EE}{\mathbb{E}}
\newcommand{\RR}{\mathbb{R}}

\newcommand{\LL}{\mathbb{L}}
\newcommand{\MM}{\mathbb{M}}

\theoremstyle{definition}
\newtheorem{definition}{Definition}[section]

\newtheorem*{assumption*}{Assumption}

\newtheorem*{condition*}{Condition}

\theoremstyle{plain}
\newtheorem{theorem}[definition]{Theorem}
\newtheorem{proposition}[definition]{Proposition}
\newtheorem{lemma}[definition]{Lemma}

\theoremstyle{remark}

\title{Multifractional Stable Motion with Random Hurst Exponent}
\author{Fabian Mies \qquad Duuk Sikkens\thanks{Authors listed alphabetically\\ Correspondence: d.r.sikkens@tudelft.nl}\\
Delft University of Technology, The Netherlands}

\begin{document}

\maketitle

\begin{abstract}
    The fractional stable motion is a prototypical stochastic process exhibiting both heavy tails and long-range dependence, parameterized via a stability index $\alpha$ and a Hurst exponent $H$.
    We consider a nonstationary extension where the Hurst exponent is a function of time, and potentially random. 
    The construction admits the standard linear fractional stable motion as tangent process, and we exactly determine its local Hölder exponent in terms of the pointwise values of the Hurst function. This is in contrast to other definitions of multifractional processes, where the Hurst function needs to have additional regularity in time.

    \textbf{Keywords:} Hölder exponent, rescaling limit, stable processes, long memory, nonstationarity
\end{abstract}

\section{Introduction}

This paper introduces and studies a new multifractional stable motion $(X(t))_{t \in \RR}$, which we call the Itô multifractional stable motion, given by
\begin{equation} \label{eq: definition ito multifractional stable motion}
    X(t) = \int_\RR(t-x)_+^{H(x)-\frac1\alpha} - (-x)_+^{H(x)-\frac1\alpha} \, dL(x), 
\end{equation}
where $(L(x))_{x \in \RR}$ is a symmetric $\alpha$-stable Lévy process with $\alpha\in(0,2]$, $(x)_+ = \max(x, 0)$, and $(H(x))_{x \in \RR}$ is an adapted process taking values between 0 and 1. The most prominent special case is the fractional Brownian motion $(W^H(t))_{t \in \RR}$, where $\alpha=2$, $H(x)=H\in(0,1)$ is constant, and \eqref{eq: definition ito multifractional stable motion} reduces to the Mandelbrot-van Ness representation \citep{mandelbrot_van_ness_fractional_brownian_motions_fractional_noises_and_applications}
$$
W^H(t) = \int_\RR (t-x)_+^{H-\frac12} - (-x)_+^{H-\frac12} \, dW(x),
$$
for a standard Brownian motion $W$. Due to its long-range dependence, the fractional Brownian motion has been used as a model in many applications, e.g.\ for network traffic, finance, autoregressive modeling and hydrology, see the overview in  \citet{doukhan_oppenheim_taqqu_theory_and_applications_of_long_range_dependence}.
However, the homogeneity of the Hölder exponent of the fractional Brownian motion makes it unfit for modeling non-stationary phenomena, as has been observed for financial time series \citep{frezza_bianchi_pianese_fractal_analysis_of_market_inefficiency_during_the_covid_19,bianchi_pantanella_pianese_modeling_stock_prices_by_multifractional_brownian_motion}, sun-spot data \citep{bibinger_cusum_tests_for_changes_in_the_hurst_exponent_and_volatility_of_fractional_brownian_motion}, or network traffic \citep{bianchi_henrique_vieira_ling_a_novel_network_traffic_predictor_based_on_multifractional_traffic_characteristic}. 
For this reason, \citet{peltier_levy_vehel_multifractional_brownian_motion_definition_and_preliminary_results} introduced the multifractional Brownian motion, where the constant $H$ is replaced by a function $H(t)$, as
\begin{align*}
    Y(t) = \int_\RR (t-x)_+^{H(t)-\frac12} - (-x)_+^{H(t)-\frac12} \, dW(x).
\end{align*}
They show that the pointwise Hölder exponent of this new process $t\mapsto Y(t)$ is $H(t)$ so that it changes along its path. 
However, there are two mathematical drawbacks to this construction. 
First, the definition in terms of a stochastic integral does not allow for a random multifractional parameter. This is only possible with additional regularity properties of $t\mapsto H(t)$ to allow for pathwise constructions of the stochastic integrals, see \citet{Ayache_taqqu_multifractional_processes_with_random_exponent}. 
Secondly, the Hölder regularity of the resulting process $t\mapsto X(t)$ depends on a Hölder condition on the multifractional parameter.
That is, if $t\mapsto H(t)$ is rough, then $Y(t)$ is also rough, regardless of the actual value of $H(t)$.

To allow for random Hurst functions, Ayache, Esser and Hamonier \citep{ayache_esser_hamonier_a_new_multifractional_process_with_random_exponent}, introduced a new multifractional Brownian motion given by  \eqref{eq: definition ito multifractional stable motion} with $\alpha=2$. 
Note that the kernels (indexed by $t$) are adapted to the natural filtration generated by the Brownian motion $W$, which means the resulting process may be understood as a collection of Itô integrals. 
To distinguish between the two types of multifractional Brownian motions, we refer to Peltier and Lévy Véhel's process $Y(t)$ as the classical multifractional Brownian motion, and to Ayache et al.'s process as the Itô multifractional Brownian motion. It has been shown by \citet{loboda_mies_steland_regularity_of_multifractional_moving_avarage_processes_with_random_hurst_exponent} that the pointwise Hölder exponent of the Itô multifractional Brownian motion at $t$ is equal to $H(t)$, irrespective of the Hölder regularity of $t \mapsto H(t)$; see also \citet{ayache_bouly_moving_average_multifractional_processes_with_random_exponent_lower_bounds_for_local_oscillations} for further path properties. 
Hence, the construction of Ayache, Esser and Hamonier addresses both limitations of the classical multifractional Brownian motion.

The heavy-tailed analogue of fractional Brownian motion is the linear fractional stable motion $(L^H(t))_{t \in \RR}$, introduced by \citet[Example 6.6]{taqqu_wolpert_infinite_variance_self_similar_processes_subordinate_to_a_poisson_measure}, see also \citet{samorodnitsky_taqqu_stable_non_gaussian_random_processes},
which is given by
$$
    L^H(t) = \int_\RR(t-x)_+^{H-\frac1\alpha} - (-x)_+^{H-\frac1\alpha} \, dL(x),
$$
where $(L(x))_{x \in \RR}$ is a symmetric $\alpha$-stable Lévy process with $\alpha \in (0,2)$. Stable distributions are a generalization of normal distributions in the sense that they serve as limit laws in a central limit theorem \citep{nolan_univariate_stable_distributions}. However, the tails of stable distributions decay polynomially. 
The process $L^H(t)$ is well-defined pointwise for any $t$ as a stochastic integral, see Section \ref{section: stochastic integration}. 
For $H> \frac{1}{\alpha}$, it admits a continuous version, but for $H<\frac{1}{\alpha}$, $L^H(t)$ looses a pathwise interpretation as the process is unbounded on any interval of positive length \citep[Ch.~10]{samorodnitsky_taqqu_stable_non_gaussian_random_processes}.
Thus, in the sequel, we restrict our attention to the case $H>\frac{1}{\alpha}$.
Mimicking the construction in the Gaussian case, a multifractional variant has been suggested \citep{stoev_taqqu_stochastic_properties_of_the_linear_multifractional_stable_motion}, where the fractional parameter $H$ is replaced by a deterministic function $H(t)$,
\begin{align}
    Y(t) = \int_\RR(t-x)_+^{H(t)-\frac1\alpha} - (-x)_+^{H(t)-\frac1\alpha} \, dL(x). \label{eqn: classical-mlfsm}
\end{align}
We refer to this process as the classical multifractional stable motion. The pointwise Hölder regularity is a bit more nuanced in the stable case. Indeed, \citet{ayache_hamonier_linear_multifractiona_stable_motion_fine_path_properties} find that, if $t \mapsto H(t)$ is sufficiently smooth, then the pointwise Hölder exponent of $Y(t)$ is $H(t)$, and the \emph{uniform} pointwise Hölder exponent is $H(t)-\frac1\alpha$.
See Section \ref{section: the ito multifractional stable motion} for the definition of these two slightly different Hölder exponents, which is not relevant for the multifractional Gaussian case, but only for the stable case. The same two mathematical drawbacks of the classical multifractional Brownian motion arise in the stable case: The formulation does not provide a natural framework for considering random multifractional parameters, and the Hölder regularity of the resulting process depends on the Hölder regularity of the multifractional parameter. 

In the Gaussian case, these drawbacks were resolved by considering an Itô multifractional Brownian motion, but no such attempts have been made in the stable regime. 
By introducing the new formulation \eqref{eq: definition ito multifractional stable motion} in the stable case, we also allow for random exponents. 
Moreover, the main result of this paper is that the pointwise Hölder exponent of $t \mapsto X(t)$ does not depend on the regularity of $t \mapsto H(t)$. 
This is illustrated by Figure \ref{fig:sim}, where sample paths of $X(t)$ and $Y(t)$ with the same Hurst function $H(t)$ are depicted. Here, the values of $H(t)$ are rather high, but the function $t \mapsto H(t)$ is very rough. 
As a consequence, the smoothness of $Y(t)$ is governed by the regularity of $t \mapsto H(t)$, while the smoothness of $X(t)$ is governed by the values of $H(t)$.
This finding is analogous to the result for the Gaussian case derived by \citet{loboda_mies_steland_regularity_of_multifractional_moving_avarage_processes_with_random_hurst_exponent}, but the mathematical study of stable processes requires a different analytical toolbox due to the heavy tails of the process, especially for the derivation of upper bounds on the Hölder exponents. Furthermore, we show that the tangent process of $X(t)$ at $t$ is the linear fractional stable motion with Hurst exponent $H(t)$, justifying its use as a nonstationary generalization of the latter.

\begin{figure}[H]
    \centering
    \includegraphics[width=\linewidth]{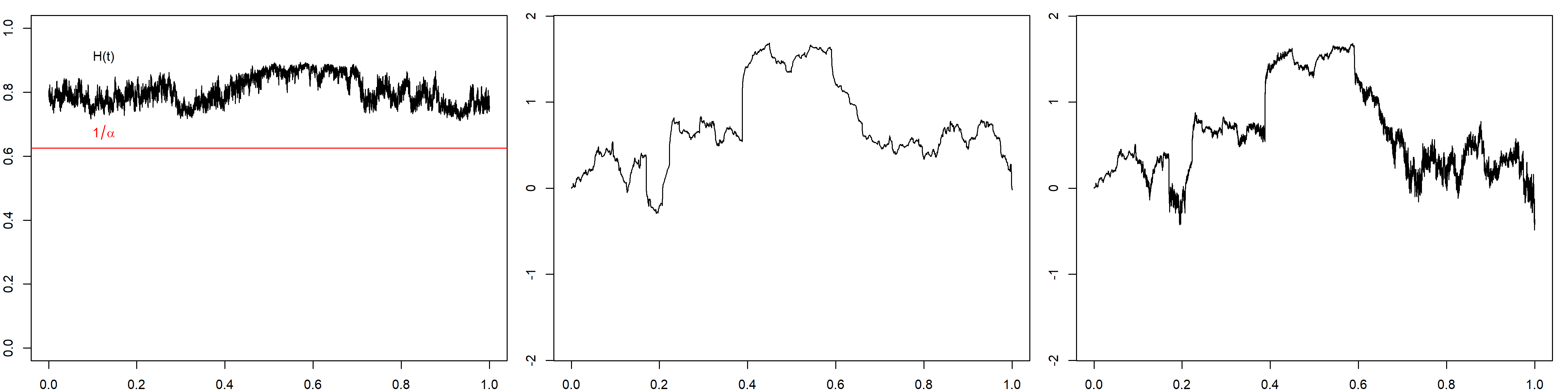}
    \caption{A rough Hurst function $H(t)$ with Hölder exponent $0.2$ (left), generated as $H(t)=0.8+0.1\tanh(W^{0.2}(t))$, and sample path of the corresponding classical multifractional stable motion $Y(t)$ with $\alpha=1.6$ (right) and the Itô multifractional stable motion $X(t)$ (center). The roughness of the $H(t)$ function does affect the path properties of $Y(t)$, whereas the properties of $X(t)$ only depend on the value of $H(t)$.}
    \label{fig:sim}
\end{figure}

The remainder of this paper is organized as follows. Section \ref{section: stochastic integration} will cover the Itô calculus against a symmetric $\alpha$-stable Lévy process. In Section \ref{section: the ito multifractional stable motion}, this Itô calculus is used to properly define the Itô multifractional stable motion, and we also treat the definition of Hölder exponents and compute the Hölder exponents of the Itô multifractional stable motion. Finally, all proofs will be collected in the appendix \ref{section: proofs}.

\paragraph{\textbf{Notation}}
Denote by $\LL^\alpha(\Omega)$ the Lebesgue space i.e.\ the space of random variables $X:\Omega \to \RR$ such that $\|X\|_{\LL^\alpha(\Omega)} = \left(\EE|X|^\alpha\right)^{1/\alpha} < \infty$. 
By $\Lambda^\alpha(\Omega)$ we denote the weak Lebesgue space, i.e.\ the space of random variables $X:\Omega \to \RR$ such that $\|X\|_{\Lambda^\alpha(\Omega)} = \left(\sup_{\lambda > 0} \lambda^\alpha \PP(|X| > \lambda)\right)^\frac1\alpha < \infty$.
Moreover, $\MM^\alpha(I)$ is the space of jointly measurable stochastic processes $F: \Omega \times I \to \RR$, adapted to a filtration $(\mathcal{F}_x)_{x \in I}$, such that $\|F\|_{\MM^\alpha(I)} = \left(\EE\int_I |F(x)|^\alpha \, dx\right)^{\frac1\alpha} < \infty$, and $\mathcal{S}(I)$ is the subspace of $\MM^\alpha(I)$ consisting of simple processes.

\section{Preliminaries on stable stochastic integrals} \label{section: stochastic integration}

This section briefly covers the Itô calculus against a symmetric $\alpha$-stable Lévy process $L$ developed by \citet{gine_marcus_the_central_limit_theorem_for_stochastic_integrals_with_respect_to_levy_processes} and extends this theory to an unbounded domain of integration.

Fix a  probability space $(\Omega, \mathcal{F}, \PP)$ and let $(L(x))_{x \in \RR}$ be a càdlàg symmetric $\alpha$-stable Lévy process with $\alpha \in (0, 2)$, standardized such that
$$
\EE\left[e^{i(L(x)-L(y))t}\right] = e^{-|x-y|\cdot |t|^\alpha}.
$$
Let $(\mathcal{F}_x)_{x \in \RR}$ be the natural filtration attributed to $L$. The goal is to develop stochastic integrals of the form $\int_\RR F \, dL$
for a suitable class of stochastic processes $(F(x))_{x \in \RR}$. For an interval $I \subseteq \RR$ we write $\MM^\alpha(I)$ for the space of jointly measurable stochastic processes $F:\Omega \times I \to \RR$, adapted to the filtration $(\mathcal{F}_x)_{x \in I}$, such that
$$
\|F\|_{\MM^\alpha(I)} = \left(\EE\int_I |F(x)|^\alpha \, dx\right)^{\frac1\alpha} < \infty. 
$$
The stochastic integral will be developed for $F \in \MM^\alpha(\RR)$, but first we restrict to the case where $I = (a,b]$ is a bounded interval and $F$ is simple, i.e. $F(\omega, x) = \sum_{k=1}^n \xi_k(\omega) \mathbbm{1}_{(x_{k-1}, x_k]}(x)$ for a partition $a = x_0 < \ldots < x_n = b$ and random variables $\xi_1, \ldots, \xi_n$ such that $\xi_k$ is $\mathcal{F}_{x_{k-1}}$-measurable. For simple processes with a decomposition as above the stochastic integral is (well-)defined in the usual way,
$$
\int_a^b F \, dL = \sum_{k=1}^n \xi_k \, (L(x_k) - L(x_{k-1})).
$$
Letting $\mathcal{S}((a,b])$ denote the space of simple processes, as a consequence of Equation (1.7) in \citet{gine_marcus_the_central_limit_theorem_for_stochastic_integrals_with_respect_to_levy_processes}, this definition yields a linear operator
\begin{align}
\begin{split} \label{eq: integral operator simple integrands}
\mathcal{S}((a,b]) &\to \Lambda^\alpha(\Omega) \\
F &\mapsto \int_a^b F \, dL
\end{split}
\end{align}
into the weak Lebesgue space $\Lambda^\alpha(\Omega)$ of random variables $X: \Omega \to \RR$ such that
\begin{equation} \label{eq: weak L alpha norm}
\|X\|_{\Lambda^\alpha(\Omega)} =  \left(\sup_{\lambda > 0} \lambda^\alpha \PP(|X| > \lambda)\right)^\frac1\alpha < \infty.
\end{equation}
The space $\Lambda^\alpha(\Omega)$ is a quasi-Banach space, which is a Banach space with a relaxed version of the triangle inequality $\|x + y\| \leq C(\|x\| + \|y\|)$ for some constant $C \geq 1$. For $\Lambda^\alpha(\Omega)$ this constant is $2 \vee 2^{1/\alpha}$ \citep[Equation (1.1.11)]{grafakos_classicall_fourier_analysis}. Lemma 3.3 in \citet{gine_marcus_the_central_limit_theorem_for_stochastic_integrals_with_respect_to_levy_processes} shows that the operator in \eqref{eq: integral operator simple integrands} is bounded for the special case $(a,b] = (0,1]$, but the proof of this lemma does not hinge on this special case and shows that the operators in \eqref{eq: integral operator simple integrands} are uniformly bounded, i.e. there is a constant $C(\alpha)$, independent of the interval $(a,b]$ and $F \in \mathcal{S}((a,b])$, such that
$$
\left\|\int_a^b F \, dL\right\|_{\Lambda^\alpha(\Omega)} \leq C(\alpha) \|F\|_{\MM^\alpha((a,b])}.
$$
Since $\mathcal{S}((a,b])$ is dense in $\MM^\alpha((a,b])$ \citep[Remark~3.2]{gine_marcus_the_central_limit_theorem_for_stochastic_integrals_with_respect_to_levy_processes} the integral operator \eqref{eq: integral operator simple integrands} uniquely extends to a uniformly bounded set of linear operators $\int_a^b \cdot \, dL: \MM^\alpha((a,b]) \to \Lambda^\alpha(\Omega)$.

Write $\mathcal{I}$ for the set of intervals $(a,b] \subseteq \RR$, partially ordered by set inclusion, and let $F \in \MM^\alpha(\RR)$. Due to uniform boundedness of the integral operators, it follows that the net $\left(\int_a^b F \, dL\right)_{(a,b] \in \mathcal{I}}$
is Cauchy in $\Lambda^\alpha(\Omega)$. Thus, we may define the stochastic integral $\int_\RR F \, dL$ as the limit of this net which results in a linear bounded operator
\begin{align*}
    \MM^\alpha(\RR) & \to \Lambda^\alpha(\Omega),\\
F &\mapsto \int_\RR F \, dL.
\end{align*}
Hence, the stochastic integral is well-defined for jointly measurable adapted integrands $F$ such that $\|F\|_{\MM^\alpha(\RR)}$ is finite and this stochastic integral defines the Itô multifractional stable motion \eqref{eq: definition ito multifractional stable motion}.

\section{The Itô Multifractional Stable Motion} \label{section: the ito multifractional stable motion}

Having established an Itô calculus against the symmetric $\alpha$-stable Lévy process, we may define the Itô multifractional stable motion for $\alpha \in (0,2)$ via \eqref{eq: definition ito multifractional stable motion}. To ensure integrability of the kernel, the (random) multifractional parameter $H(x)$ is required to be jointly measurable, we impose the following basic assumption.
\begin{quote}
    The stochastic process $H(x)$ is jointly measurable, adapted to $\mathcal{F}_x$, and bounded such that $\PP(\forall x \in \RR : \underline{H} \leq H(x) \leq \overline{H}) = 1$, for some $0<\underline{H} \leq \overline{H}<1$. 
\end{quote}
Under this restriction, it readily follows that the kernels $F_t(x,\omega)= (t-x)_+^{H(\omega, x)-1/\alpha} - (-x)_+^{H(\omega, x)-1/\alpha}$, indexed by $t \in \RR$, are in the class $\MM^\alpha(\RR)$, and thus the process $X(t) = \int_\RR F_t(x)\, dL(x)$ is well-defined.

In order to gain more insight into its path structure, it is instructive to consider the Lévy-Itô decomposition of the driving process as
$$
L(b) - L(a) = \int_a^b\int_{(-\gamma,\gamma)} y \, d\tilde N(x,y) + \int_a^b\int_{\RR\setminus(-\gamma,\gamma)} y \, dN(x,y) \quad a < b,
$$
where $N$ is a Poisson point process with intensity measure $dx\, d\mu(y)$, for the Lévy measure $d\mu(y) = \alpha |y|^{-\alpha-1} dy$, $\tilde{N}$ is the compensated Poisson process, and $\gamma > 0$. Since the Lévy measure is symmetric, the value of the truncation $\gamma$ does not matter.
Accordingly, under further assumptions on the Hurst function that will be provided later, for any $\gamma > 0$ and $t_0 \in \RR$, for $t > t_0$ the multifractional stable motion $X(t)$ may be decomposed as (see Lemma \ref{lemma: integral representation as integral against small and big jumps})
\begin{align*}
    X(t) 
    &=  \int_{-\infty}^{t_0} F_t(x) \, dL(x) + \int_{t_0}^t \int_{(-\gamma, \gamma)} y F_t(x)\, d\tilde{N}(x,y) + \int_{t_0}^t \int_{\RR \setminus (-\gamma, \gamma)} y F_t(x)\, dN(x,y).
\end{align*}
Letting $t_0 \to -\infty$ and $\gamma\to 0$, the first term tends to zero in $\Lambda^\alpha(\Omega)$ and the second term tends to zero in $\LL^2(\Omega)$. Thus, in $\Lambda^\alpha(\Omega)$ and for fixed $t$,
\begin{align}
    X(t)
    &= \lim_{\substack{t_0\to-\infty \\\gamma\to 0}} \int_{t_0}^t \int_{\RR \setminus (-\gamma, \gamma)} y F_t(x)\, d\tilde{N}(x,y)\nonumber\\
    &= \lim_{\substack{t_0\to-\infty \\\gamma\to 0}} \sum_{\substack{t_0 < x \leq t \\|\Delta L(x)|\geq\gamma}} \Delta L(x) \left((t-x)_+^{H(x)-\frac{1}{\alpha}} - (-x)_+^{H(x)-\frac1\alpha}\right),\label{eqn:jump-representation}
\end{align}
where $\Delta L(x)$ is the jump size at time $x$. 
We may hence think of the process $X(t)$ as a countable superposition of fractional power functions with random offsets and amplitudes. This representation can also be used for simulations with a deterministic Hurst function $H(x)$, or if $H(x)$ is independent of the driving Lévy motion $L(x)$, as follows.
Instead of discretizing the time domain, we fix a threshold $(t_0, \gamma)$ and approximate $X(t)$ in a given set of points $t = t_1 < \ldots < t_n$ by simulating the random atoms of the Poisson point process $N$ on $(t_0, t_n] \times \RR\setminus(-\gamma,\gamma)$. The total number of jumps $m = N((t_0, t_n] \times \RR \setminus (-\gamma, \gamma))$ follows a Poisson distribution with rate $\lambda = |(t_0, t_n]| \cdot \mu(\RR \setminus (-\gamma,\gamma)) = 2 (t_n-t_0) \gamma^{-\alpha}$. Conditioned on $m$, the distribution of the times and heights of the jumps are given by the normalized intensity measure of $N$. That is, the jump times are uniformly distributed on $(t_0, t_n]$, and the heights are distributed according to $\mu_{\restriction \RR \setminus (-\gamma, \gamma)} / \mu(\RR \setminus (-\gamma, \gamma))$, which has density equal to $\frac12\alpha\gamma^\alpha|y|^{-\alpha-1}$ supported on $\RR \setminus (-\gamma, \gamma)$. We see that the heights may be obtained as an independent product of a Rademacher and a $\text{Pareto}(\gamma, \alpha)$ distribution. Approximate sample paths may thus be generated as follows:
\begin{enumerate}
    \item Sample $m \sim \text{Pois}(2(t_n-t_0)\gamma^{-\alpha})$, the random number of jumps in the interval $(t_0,t_n]$ of size at least $\gamma$.
    \item Sample the jump times $x_1,\ldots, x_m \sim \text{Unif}(t_0,t_n]$.
    \item Sample the jump sizes $y_1, \ldots, y_m \sim \text{Rademacher} \times \text{Pareto}(\gamma, \alpha)$.
    \item For $t = t_1, \ldots, t_n$ compute $X(t) = \sum_{i=1}^m y_i \left((t-x_i)_+^{H(x_i)-1/\alpha} - (-x_i)_+^{H(x_i)-1/\alpha}\right)$.
\end{enumerate}
This procedure has been used to generate Figure \ref{fig:sim}, and Figure \ref{fig:sample-path-2} provides another illustration of the sample path using this procedure.

\begin{figure}[H]
    \centering
    \includegraphics[width=0.8\linewidth]{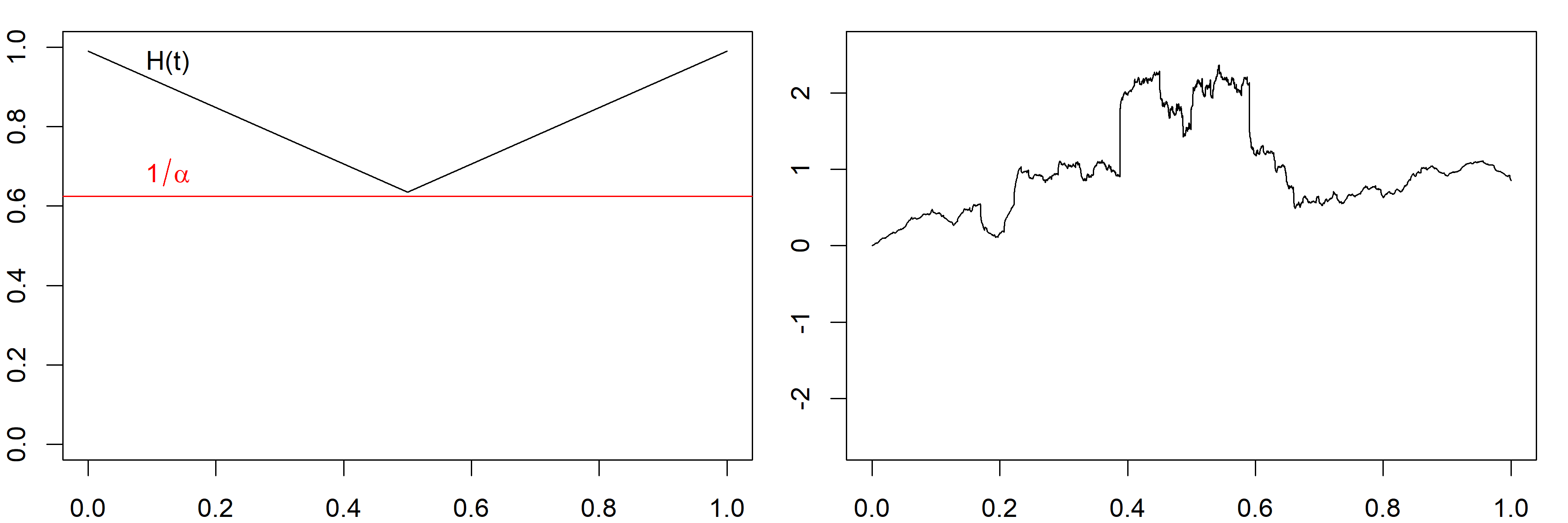}
    \caption{A Hurst function (left) with $\alpha = 1.6$ and $H(t) = 0.99 \wedge ((0.98-1/\alpha)|2t-1|+1/\alpha+0.01)$ and a sample path of its corresponding Itô multifractional stable motion (right).}
    \label{fig:sample-path-2}
\end{figure}

% min(1-delta, 2*(1-1/alpha-2*delta)*abs(t-0.5) + 1/alpha + delta)

The representation \eqref{eqn:jump-representation} also suggests that the local uniform Hölder exponent $\rho^\text{unif}_X(t)$ at time $t$ is $\rho^\text{unif}_X(t)=H(t)-\frac{1}{\alpha}$, corresponding to those powers. 
The main goal of this section, and the main result of this article, is to make this intuition about the regularity precise.

\subsection{Local regularity of sample paths}\label{sec:regularity}
Our main result regards local smoothness of the Itô multifractional stable motion. For a continuous function $f$, the local regularity may be expressed either in terms of the pointwise Hölder exponent $\rho_f(t)$ at a point $t$, via the uniform Hölder exponent $\rho_f^\text{unif}(I)$ over an interval $I$, or via the local uniform Hölder exponent $\rho_f^\text{unif}(t)$ at a point $t$.
These exponents are defined as

\begin{align*}
    \rho_f(t) &= \sup \left\{\rho \geq 0 : \limsup_{h \to 0} \frac{|f(t+h) - f(t)|}{|h|^\rho} = 0\right\}, \\
    \rho^\text{unif}_f(I) &= \sup\left\{\rho \geq 0 : \sup_{\substack{t_1, t_2 \in I \\ t_1 \neq t_2 }} \frac{|f(t_2) - f(t_1)|}{|t_2 - t_1|^\rho} < \infty\right\},\\
    \rho^{\text{unif}}_f(t) &= \sup \left\{\rho \geq 0 : \limsup_{h \downarrow 0} \,\, \sup_{\substack{t_1, t_2 \in [t-h, t+h] \\ t_1 \neq t_2}} \frac{|f(t_2) - f(t_1)|}{|t_2 - t_1|^\rho} = 0\right\}.
\end{align*}
Clearly, when $t$ is an interior point of $I$, these Hölder exponents are ordered as
\begin{align*}
    \rho_f^{\text{unif}}(I) \leq \rho_f^{\text{unif}}(t) \leq \rho_f(t).
\end{align*}
For the classical multifractional stable motion $Y(t)$, it is known that $\rho_Y^\text{unif}([a,b]) \geq \min_{t \in [a,b]} H(t) - \frac1\alpha$, under the assumption that $H$ takes values in a compact subset of $(1/\alpha, 1)$ and that $H$ is almost $\rho$-Hölder continuous for $\rho = \min_{t\in[a,b]}H(t)-\frac1\alpha$, see \cite[Cor.~5.3]{ayache_hamonier_linear_multifractiona_stable_motion_fine_path_properties}. 
We show that for the Itô multifractional stable motion $X(t)$, the same bound for the Hölder exponent holds but under much weaker regularity assumptions on $t \mapsto H(t)$. To be precise, we assume that the stochastic process $H$ admits a (deterministic) modulus of continuity $w:\RR_{\geq0} \to \RR_{\geq0}$, i.e.\ a continuous increasing function with $w(0)=0$, such that
\begin{equation} \label{eq: continuity assumption on H}
    \PP\left(\forall s, t \in \RR : |t-s| \leq 1 \implies |H(t) - H(s)| \leq w(|t-s|)\right) = 1.\tag{A}
\end{equation}
This restriction is weaker than a Hölder condition and allows, for example, for the case $w(h)=(1+|\log(h)|)^{-1}$. 
To derive the novel regularity result, we pursue a different mathematical approach compared to \citet{ayache_hamonier_linear_multifractiona_stable_motion_fine_path_properties}. 
Therein, the lower bound for the Hölder exponent is derived from a wavelet representation of $Y(t)$. In contrast, we employ a localized Kolmogorov-Chentsov criterion which has been developed previously to study the multifractional Gaussian case \citep{loboda_mies_steland_regularity_of_multifractional_moving_avarage_processes_with_random_hurst_exponent}. Their formulation allows for random and varying Hölder exponents and is provided in full in Section \ref{section: proofs}. The essential part of the proof is the following bound on the moments of scaled increments of $X(t)$, which requires different techniques than for the Gaussian process.

\begin{proposition}\label{proposition:increment-moments}
    Under condition \eqref{eq: continuity assumption on H}, for any $\delta>0$, there are constants $C > 0$ and $\epsilon > 0$ such that
    \begin{align*}
        \left\|\frac{X(t)-X(s)}{|t-s|^{\min_{r\in[s \wedge t,s \vee t]} H(r) - \frac{1}{\alpha} - \delta}}\right\|_{\Lambda^\alpha(\Omega)}^\alpha \leq C |t-s|, \quad \text{whenever } |t-s|\leq \epsilon.
    \end{align*}
    Hence, for any $\delta > 0$ and $\frac{\alpha}{1+\alpha\delta/2}<p<\alpha$, there are constants $C > 0$ and $\epsilon > 0$ such that
    \begin{align*}
        \left\|\frac{X(t)-X(s)}{|t-s|^{\min_{r\in[s \wedge t,s \vee t]} H(r) - \frac{1}{\alpha}-\delta}}\right\|_{\mathbb{L}^p(\Omega)}^p \leq C |t-s|,\quad \text{whenever } |t-s|\leq \epsilon.
    \end{align*}
\end{proposition}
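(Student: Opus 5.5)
Without loss of generality take $s<t$, write $h=t-s\le\epsilon\le 1$, and put $\underline{H}_{s,t}=\min_{r\in[s,t]}H(r)$. The difference of kernels splits into two pieces:
\[
X(t)-X(s)=\int_{-\infty}^{s}\big[(t-x)^{H(x)-\frac1\alpha}-(s-x)^{H(x)-\frac1\alpha}\big]\,dL(x)+\int_s^t (t-x)^{H(x)-\frac1\alpha}\,dL(x)=:A+B.
\]
Because the $\Lambda^\alpha$ quasi-norm obeys a quasi-triangle inequality, it suffices to bound $\|A\|_{\Lambda^\alpha}^\alpha$ and $\|B\|_{\Lambda^\alpha}^\alpha$ separately, each by $C h^{1+\alpha(\underline H_{s,t}-1/\alpha-\delta)}=Ch^{\alpha(\underline H_{s,t}-\delta)}$. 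The exponent $\underline H_{s,t}$ is random, so I will not divide by a random power inside the integral. Instead I will use the deterministic bound from the Itô integral of Section \ref{section: stochastic integration}, $\|\int F\,dL\|_{\Lambda^\alpha}\le C(\alpha)\|F\|_{\MM^\alpha}$. The plan is to write the normalized increment as a single stochastic integral. For fixed $s,t$, the quantity $\underline H_{s,t}$ is $\mathcal F_t$-measurable but not $\mathcal F_s$-measurable, so it cannot simply be pulled inside. To get around this, I replace it by an adapted proxy. By \eqref{eq: continuity assumption on H}, $|H(r)-H(x)|\le w(h)$ for $r\in[s,t]$ and $x\in[s,t]$, and for $x\le s$ we have $H(s)-w(h)\le \underline H_{s,t}$. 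So I choose $\epsilon$ with $w(\epsilon)\le\delta/2$. Then $|h^{-(\underline H_{s,t}-\frac1\alpha-\delta)}|\le h^{-(H(x)-\frac1\alpha-\delta/2)}$ for every $x\in[s,t]$, and $\le h^{-(H(s)-\frac1\alpha-\delta/2)}$ as well. The factor $H(s)$ is $\mathcal F_s$-measurable, hence an admissible integrand factor on $(s,\infty)$, but not on $(-\infty,s]$. On $(-\infty,s]$ I will therefore handle the normalization differently. I bound $\PP(|A|\,h^{-(\underline H-\frac1\alpha-\delta)}>\lambda)$ by discretizing the possible values of $\underline H$ on a grid $\{k\delta/4\}$. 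On each event $\{\underline H\in[k\delta/4,(k+1)\delta/4)\}$ the normalization is bounded by the deterministic constant $h^{-(k\delta/4-\frac1\alpha-\delta)}$. On the same event, $H(x)\ge k\delta/4-w(\cdot)$ holds locally near $s$. A union bound over the finitely many $k$ costs only a constant.

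With the normalization made deterministic on each piece, the problem reduces to kernel estimates. For $B$, the quantity $\EE\int_s^t (t-x)^{\alpha H(x)-1}h^{-\alpha(c-\frac1\alpha-\delta)}dx$ with $c\le H(x)+\delta/2$ is at most $\int_0^h u^{\alpha H-1}h^{-\alpha H+1+\alpha\delta/2}du\lesssim h$. For $A$ I split $(-\infty,s]$ into the near region $x\in[s-\eta,s]$ and the far region $x<s-\eta$, with $\eta\le1$ fixed. In the near region, $H(x)\ge \underline H_{s,t}-w(\eta)$, and I take $\eta$ small, but fixed, so that $w(\eta)\le\delta/4$. The mean value bound $|(u+h)^{\beta}-u^{\beta}|\lesssim h\,u^{\beta-1}\wedge u^{\beta}$ with $\beta=H(x)-\frac1\alpha\in(\underline H-\frac1\alpha,\overline H-\frac1\alpha)$ then gives $\int_0^\infty |(u+h)^\beta-u^\beta|^\alpha du\asymp h^{\alpha\beta+1}$, uniformly in $\beta$ on compacts. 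In the far region, $u\ge\eta$, so the bound $h^\alpha u^{\alpha(\beta-1)}$ is integrable because $\beta<1$, and it contributes $O(h^\alpha)\le O(h^{\alpha(\underline H-\delta)})$. Here I need $H(x)\ge \underline H_{s,t}-\dots$ only through the trivial bound $\underline H\le\overline H<1$.

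The main obstacle is the adaptedness issue caused by the random exponent in the normalization, which is handled by the grid-and-union-bound argument above. An alternative is to write the increment as a local normalization times an $\MM^\alpha$ integral and then use the quasi-norm. In the union bound, note that on each event the integrand $\mathbbm 1\{\underline H\in\cdot\}F$ is not adapted. I will instead bound $\PP(E_k\cap\{|A|>\lambda h^{c_k}\})\le\PP(|A_k'|>\lambda h^{c_k})$, where $A_k'$ uses the adapted kernel truncated where $H(x)<c_k-\delta/4$. This truncated kernel agrees with $A$ on $E_k$ near $s$, and this last step is the place I expect to require care. Finally, the $\LL^p$ statement follows from the standard inequality $\EE|Z|^p\le \frac{\alpha}{\alpha-p}\|Z\|_{\Lambda^\alpha}^{p}$, applied with $\delta$ replaced by $\delta/2$. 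This gives $\|Z\|_{\LL^p}^p\lesssim (h^{1/\alpha}h^{\delta/2})^{p}$, and the constraint $p>\alpha/(1+\alpha\delta/2)$ gives $p(1/\alpha+\delta/2)>1$.
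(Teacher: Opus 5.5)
Your decomposition (far window $(-\infty,s-\eta]$, near window $[s-\eta,s]$, and $[s,t]$) matches the paper's $D_\epsilon$, $E_\epsilon$, $F$. So do the deterministic bound $h^{-\overline H}$ on the far window, the $\mathcal F_s$-measurable proxy $H(s)$ on $[s,t]$, the kernel estimates and the passage from $\Lambda^\alpha$ to $\LL^p$. The gap is in the near window, which is exactly where the non-adapted normalization must be handled.

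Your inequality $\PP(E_k\cap\{|A|>\lambda h^{c_k}\})\le\PP(|A_k'|>\lambda h^{c_k})$ requires $\int_{s-\eta}^s F\,dL=\int_{s-\eta}^s F_k'\,dL$ almost surely on $E_k$. That is a locality property of the stochastic integral on $E_k=\{\underline H_{s,t}\in\cdot\}$, an event in $\mathcal F_t$ but not in $\mathcal F_{s-\eta}$. Section~\ref{section: stochastic integration} does not provide this, for three reasons:
\begin{itemize}
\item the integral is only a $\Lambda^\alpha$-limit of simple integrals;
\item general simple approximants of $F-F_k'$ need not vanish on $E_k$;
\item Lemma~\ref{lemma: integral representation as integral against small and big jumps} needs left-continuous integrands, which your indicator truncation destroys.
\end{itemize}
The property is true, and your route can be repaired. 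For example, truncate via $H(x)\vee(c_k-\delta/4)$, so that $F-F_k'$ is continuous on $[s-\eta,s)$; its left-point Riemann approximants then vanish identically on $E_k$ and converge in $\MM^\alpha$. As written, however, the step carrying your whole argument is left open. There is also a small slip. On $\{\underline H\in[k\delta/4,(k+1)\delta/4)\}$ the normalization is bounded by $h^{-((k+1)\delta/4-1/\alpha-\delta)}$, the upper endpoint, because $x\mapsto h^{-x}$ is increasing for $h<1$. The slack in $\delta$ absorbs this.

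The missing idea, which makes the grid, the union bound and the locality question unnecessary, is the paper's: move the adapted proxy to the left endpoint of the near window. By \eqref{eq: continuity assumption on H}, $h^{-\underline H_{s,t}}\le h^{-H(s)}\le h^{-H(s-\eta)-w(\eta)}$. So the normalized near-window term is at most $h^{1/\alpha+\delta-w(\eta)}$ times $h^{-H(s-\eta)}\int_{s-\eta}^s F\,dL$. The factor $h^{-H(s-\eta)}$ is bounded and $\mathcal F_{s-\eta}$-measurable, so it can be moved inside the integral; this is immediate for simple integrands and extends by density. Since $H(x)\ge H(s-\eta)-w(\eta)$ on the window, the $\MM^\alpha$ bound gives $\EE\bigl[h^{-\alpha H(s-\eta)}\int_{s-\eta}^s|F(x)|^\alpha\,dx\bigr]\le C h^{-\alpha w(\eta)}$. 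The near window therefore contributes at most $Ch^{1+\alpha\delta-2\alpha w(\eta)}\le Ch$ once $2w(\eta)\le\delta$. This is how the paper treats $E_\epsilon$. With it, the rest of your proof goes through as you wrote it.
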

To ensure positivity of the Hölder exponents, and thus continuous sample paths, we further impose that
\begin{equation} \label{eq: boundedness assumption on H}
    \underline{H} > \frac1\alpha, \tag{B},
\end{equation}
which is also presumed by \citet{ayache_hamonier_linear_multifractiona_stable_motion_fine_path_properties}.
Condition \eqref{eq: boundedness assumption on H} is necessary for continuity of the sample paths, even in the homogeneous case, see \cite[Ch.~10]{samorodnitsky_taqqu_stable_non_gaussian_random_processes}.
Under this assumption, and the continuity assumption \eqref{eq: continuity assumption on H}, we obtain a version of the Itô multifractional stable motion such that
\begin{equation} \label{eq: lower bound holder regularity}
\PP\left(\forall [a,b] \subseteq \RR : \rho^\text{\emph{unif}}_X([a,b]) \geq \min_{t \in [a,b]} H(t) - \frac1\alpha\right) = 1.
\end{equation}
In the sequel, we identify the Itô multifractional stable motion with its continuous modification.

To show that the lower bound on the Hölder exponent is sharp, we need a matching upper bound, i.e.\ a roughness result. 
For the classical multifractional stable motion, \citet[Thm.~6.1]{ayache_hamonier_linear_multifractiona_stable_motion_fine_path_properties} showed that $\rho^{\text{unif}}_Y([a,b]) \leq \min_{t \in [a,b]}H(t) - \frac1\alpha$ via a Wavelet representation, making extensive use of the smoothness of $t \mapsto H(t)$.
As we allow for rougher Hurst functions, we choose a different approach to derive the same lower bound by using a jump representation of $X(t)$ formalized in Lemma \ref{lemma: integral representation as integral against small and big jumps}. 

\begin{theorem}\label{theorem: equality holder exponents}
Assume \eqref{eq: continuity assumption on H} and \eqref{eq: boundedness assumption on H}. Then, for every $t_0 \in \RR$,
$$
\PP\left(\rho_X^\text{unif}(t_0) = H(t_0) - \frac1\alpha\right) = 1.
$$
Moreover, the uniform Hölder exponent is described uniformly over all intervals as
$$
\PP\left(\forall [a,b] \subseteq \RR : \rho^\text{unif}_X([a,b]) = \min_{t \in [a,b]} H(t)-\frac1\alpha\right) = 1.
$$
\end{theorem}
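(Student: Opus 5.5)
The lower bounds are already available. By \eqref{eq: lower bound holder regularity}, on an event of probability one we have $\rho^\text{unif}_X([a,b]) \geq \min_{[a,b]} H - \frac1\alpha$ for all intervals. Continuity of $H$ (condition \eqref{eq: continuity assumption on H}) then gives $\rho^\text{unif}_X(t_0) \geq \lim_{h\downarrow0}\min_{[t_0-h,t_0+h]}H - \frac1\alpha = H(t_0)-\frac1\alpha$. So it remains to prove the upper bounds. The statement for all intervals follows from the pointwise one. Indeed, $\rho^\text{unif}_X([a,b]) \leq \rho^\text{unif}_X(t)$ for interior $t$, and by continuity the minimum of $H$ over $[a,b]$ is approximated by interior points. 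It therefore suffices to show that, almost surely, $\rho_X^\text{unif}(t) \leq H(t)-\frac1\alpha$ simultaneously for all $t$ in a countable dense set, or better for all $t$. For fixed $t_0$ the first claim is the special case needed.

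\textbf{Upper bound via big jumps.} I use the jump representation of Lemma~\ref{lemma: integral representation as integral against small and big jumps}. Fix $t_0$ and $h>0$, and split $X$ on $[t_0-h,t_0+h]$ into three parts:
\begin{itemize}
\item the contribution of the past before $t_0-h$, which is smooth (even $C^\infty$) in $t$ on $[t_0-h/2,t_0+h]$ after a further localisation;
\item the compensated small-jump integral with jumps of size $<\gamma$;
\item the finitely many big jumps in $(t_0-h,t_0+h]$.
\end{itemize}
If a jump of size $y$ occurs at $x\in(t_0-h,t_0+h)$, then just to the right of $x$ the path behaves like $y\,(t-x)_+^{H(x)-1/\alpha}$ plus the rest. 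Set $\beta=H(x)-1/\alpha\in(0,1)$. Then $|X(x+\eta)-X(x)|/\eta^{\beta+\delta} \approx |y|\eta^{-\delta}\to\infty$, provided the remaining terms are $o(\eta^{\beta+\delta})$. Since jump times are dense almost surely (the Lévy measure is infinite), every neighbourhood of $t_0$ contains jump times $x$ with $H(x)$ arbitrarily close to $H(t_0)$. This gives $\rho^\text{unif}_X(t_0)\le H(t_0)-1/\alpha+\delta$ for every $\delta$.

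\textbf{Making the remainder negligible.} This is the main obstacle: the remaining terms must not cancel the singularity at a given jump. I would argue as follows. Choose jump times $x$ with $|\Delta L(x)|\geq\gamma$ in $(t_0-h,t_0+h)$; there are finitely many such jumps for fixed $\gamma$. The other big jumps are at positive distance from $x$, so their kernels are smooth near $x^+$. For the "rest" $R(t)=X(t)-\Delta L(x)(t-x)_+^{\beta}$, I would use the increment-moment bound of Proposition~\ref{proposition:increment-moments}, applied conditionally, or rather to the process with the jump at $x$ removed. By the Palm/Mecke formula for Poisson processes, removing one atom leaves a process with the same law. Hence $R$ has Hölder exponent at least $\min_{[x,x+\eta]}H-1/\alpha-\delta'$ near $x$, which is not enough by itself since it equals $\beta$ up to $\delta'$. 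To get strictness, I instead compare along a sequence. Take $\eta_n\downarrow0$ and consider $X(x+\eta_n)-X(x)$. The jump term is of order $|y|\eta_n^{\beta}$. For the rest, Proposition~\ref{proposition:increment-moments} with Markov's inequality gives $\PP(|R(x+\eta_n)-R(x)|>\epsilon\eta_n^{\beta})\lesssim \eta_n^{\alpha\delta'}\cdot$ small corrections, using the modulus $w$ to replace $\min H$ by $\beta - w(\eta_n)$. Since $\eta_n^{-w(\eta_n)}$ may diverge, I would use $\delta'$ slack: I prove the bound against $\eta^{\beta+\delta}$ only. Then the rest is $O_P(\eta^{\beta-\delta/2})$, which does not immediately suffice either. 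So the cleaner route is to prove the bound for a \emph{fixed} $\delta$: $|R(x+\eta)-R(x)|/\eta^{\beta+\delta}$ need not blow up at a rate faster than $\eta^{-\delta}$. Concretely, I would show by Borel--Cantelli along $\eta_n=2^{-n}$ that, with positive probability tending to one as $\gamma\to0$, the jump term dominates infinitely often. Combined with the $0$--$1$ structure (the event only depends on jumps in shrinking neighbourhoods), this gives probability one.

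\textbf{Uniformity.} For the interval statement I take $t_0$ rational with $H(t_0)$ approximating $\min_{[a,b]}H$. The pointwise result holds simultaneously on a countable set almost surely, and continuity of $H$ closes the gap, yielding the second display.
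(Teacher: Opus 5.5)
\textbf{There is a genuine gap.} Your reductions agree with the paper: the lower bound from \eqref{eq: lower bound holder regularity} plus continuity of $H$, and the passage from a countable dense set of $t_0$ to all intervals. So does your heuristic, that a jump at $x$ creates the cusp $\Delta L(x)(t-x)_+^{H(x)-1/\alpha}$, which the rest must not cancel. The gap is the step you yourself call the main obstacle. It is never carried out, and the estimates you sketch for it are off.

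For \emph{deterministic} $s$, Proposition~\ref{proposition:increment-moments} says $(X(s+\eta)-X(s))/\eta^{\min_{[s,s+\eta]}H-\delta}$ is bounded in $\Lambda^\alpha(\Omega)$. So the increment is $O_\PP(\eta^{H(s)-\delta-w(\eta)})$, smaller than $\eta^{\beta}$ (with $\beta=H(s)-1/\alpha$) by almost a factor $\eta^{1/\alpha}$. Markov's inequality then gives $\PP(|X(s+\eta)-X(s)|>\epsilon\eta^{\beta})\lesssim\epsilon^{-\alpha}\eta^{1-\alpha(\delta+w(\eta))}$, which is summable along $\eta=2^{-n}$. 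The $1/\alpha$ is lost only when passing to pathwise H\"older bounds. Hence your claim that the rest is merely $O_\PP(\eta^{\beta-\delta/2})$ is wrong, and pathwise H\"older information on $R$ is the wrong tool.

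The actual difficulty is that $x$ is a jump time of $L$. It is random and correlated with the integrator and, through adaptedness, with $H(z)$ for $z>x$. A bound proved for fixed $s$ therefore cannot simply be evaluated at $s=x$. Your Palm remark does not resolve this: $R$ is not the process generated by $N$ with the atom removed, because the Hurst function after $x$ depends on that atom. A Mecke-formula argument under $N+\delta_{(x,y)}$ might be made to work. It would require checking that the perturbed Hurst function is still adapted and satisfies \eqref{eq: continuity assumption on H}, and that the stochastic integral splits off the atom's contribution. None of this is in the proposal. It ends instead with an unspecified Borel--Cantelli step and an appeal to a $0$--$1$ law that is neither justified (the Hurst function near $t_0$ depends on the whole past) nor needed.

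The paper instead makes the random time harmless by enlarging the filtration. Let $\tau$ be the \emph{first} jump of size at least $\gamma$ in $(t_0-\delta,t_0+\delta]$, and set $\widehat{\mathcal F}_x=\sigma(\mathcal F_x\cup\Sigma)$, where $\Sigma$ is generated by $N$ on $(t_0-\delta,t_0+\delta]\times\RR\setminus(-\gamma,\gamma)$. Since $\Sigma$ is independent of $L$ on $(-\infty,t_0-\delta]$ and of the small jumps in the window, these remain a stable integrator and a martingale measure for $(\widehat{\mathcal F}_x)$. Meanwhile $F_\tau$ and $F_{\tau+h}$ are $\widehat{\mathcal F}$-adapted.

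After checking, by partition approximation and path continuity, that $A(\tau)$ and $B(\tau)$ coincide with the stochastic integrals of $F_\tau$, the paper applies the $\Lambda^\alpha$ bound and the It\^o isometry. The increments $A(\tau+h)-A(\tau)$ and $B(\tau+h)-B(\tau)$ are of orders $h^{\underline H-w(\epsilon)}$ in $\Lambda^\alpha(\Omega)$ and $h^{\underline H-1/\alpha+1/2-w(\epsilon)}$ in $\LL^2(\Omega)$. They are therefore $o_\PP(h^{H(t_0)-1/\alpha+\rho})$ for $0<\rho<\frac12-\overline H+\underline H$, hence almost surely negligible along a subsequence $h_n\downarrow0$. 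Because $\tau$ is the first big jump, $C(\tau+h)-C(\tau)=\Delta L(\tau)h^{H(\tau)-1/\alpha}$ exactly for small $h$. So no smoothness of the past part or of other jumps' kernels is needed, and letting $\gamma\downarrow0$ gives probability one.
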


For the pointwise Hölder exponent, Theorem \ref{theorem: equality holder exponents} implies that $\rho_X(t_0)\geq H(t_0)-\frac{1}{\alpha}$ almost surely, which is weaker than the result of \citet{ayache_hamonier_linear_multifractiona_stable_motion_fine_path_properties} for the classical multifractional stable motion. However, we can obtain the following upper bound on the pointwise Hölder exponent, which is a direct consequence of Proposition \ref{proposition: tangent process ito multifractional stable motion} below.

\begin{theorem} \label{theorem: upper bound (non-uniform) poinwise holder exponent}
Assume \eqref{eq: continuity assumption on H} and \eqref{eq: boundedness assumption on H} and assume that ${w(h)\log(h) \to 0}$ as $h \downarrow 0$. Then, for all $t_0 \in \RR$, 
$$
\PP\left(\rho_X(t_0) \leq H(t_0)\right) = 1.
$$
\end{theorem}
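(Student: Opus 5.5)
The plan is to derive Theorem~\ref{theorem: upper bound (non-uniform) poinwise holder exponent} from the tangent process result, Proposition~\ref{proposition: tangent process ito multifractional stable motion}, in the way the text indicates. I expect that proposition to state that, conditionally on $H(t_0)$ (or jointly with it),
\begin{equation*}
\left(\frac{X(t_0+uh)-X(t_0)}{h^{H(t_0)}}\right)_{u} \longrightarrow \left(L^{H(t_0)}(u)\right)_{u}, \qquad h\downarrow 0,
\end{equation*}
in finite-dimensional distributions. Here $L^{H(t_0)}$ is a linear fractional stable motion driven by a stable process independent of $H(t_0)$. The assumption $w(h)\log h\to 0$ is exactly what is needed there. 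Replacing $H(x)$ by $H(t_0)$ for $x$ near $t_0$ changes $h^{H(x)}$ by the factor $h^{H(x)-H(t_0)}=\exp\bigl((H(x)-H(t_0))\log h\bigr)$, which tends to $1$ uniformly when $|H(x)-H(t_0)|\,|\log h|\le w(h)|\log h|\to 0$.

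Given the tangent limit at the single point $u=1$, I will argue by contradiction. Suppose the event $A=\{\rho_X(t_0)>H(t_0)\}$ has positive probability. Pick a rational $\eta>0$ with $\PP(A_\eta)>0$, where
\begin{equation*}
A_\eta=\Bigl\{\limsup_{h\to0}|X(t_0+h)-X(t_0)|/|h|^{H(t_0)+\eta}=0\Bigr\}.
\end{equation*}
On $A_\eta$, the ratio $Z_h=(X(t_0+h)-X(t_0))/h^{H(t_0)}$ satisfies $|Z_h|\le h^{\eta}$ for all small $h$, so $Z_h\mathbbm{1}_{A_\eta}\to 0$ almost surely. Hence for every $\lambda>0$,
\begin{equation*}
\liminf_{h\downarrow0}\PP(|Z_h|\le\lambda)\ge \PP(A_\eta).
\end{equation*}
By the tangent result and the portmanteau theorem (closed set),
\begin{equation*}
\PP\bigl(|L^{H(t_0)}(1)|\le\lambda\bigr)\ge\limsup_{h\downarrow 0}\PP(|Z_h|\le\lambda)\ge\PP(A_\eta).
\end{equation*}
Conditionally on $H(t_0)=H$, the variable $L^{H}(1)$ is symmetric $\alpha$-stable with scale $\sigma_H=\|(1-x)_+^{H-1/\alpha}-(-x)_+^{H-1/\alpha}\|_{L^\alpha}>0$. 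Since $H$ ranges in $[\underline H,\overline H]$ and the scale is continuous and positive, $\sigma_H$ is bounded below uniformly. The stable density is bounded by $c/\sigma_H$, so $\PP(|L^{H(t_0)}(1)|\le\lambda)\le C\lambda\to0$. This contradicts $\PP(A_\eta)>0$. Taking the countable union over rational $\eta$ gives $\PP(\rho_X(t_0)>H(t_0))=0$.

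One subtlety is that the convergence in distribution must hold for the ratio normalized by the random power $h^{H(t_0)}$, rather than by a deterministic power. I will ensure the tangent proposition is used in this random-normalization form. If the proposition is instead stated conditionally on $\mathcal F_{t_0}$, the argument is the same after conditioning: $A_\eta$ need not be $\mathcal F_{t_0}$-measurable, but only its unconditional probability enters the bound above.

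The main obstacle is therefore not this deduction but the tangent proposition itself, and in particular whether it holds with non-degenerate limit jointly with $H(t_0)$. There, the contribution from $x<t_0$ (where $H$ is random and adapted, but not near constant far away) has to be shown to be $o(h^{H(t_0)})$ in probability. Near $t_0$, the kernel difference is controlled by $w(h)|\log h|$, as explained in the first paragraph. Far from $t_0$, the increment of the kernel is of order $h\,|x-t_0|^{H-1/\alpha-1}$, which is $o(h^{H(t_0)})$ since $H<1$. Both bounds are estimated in $\Lambda^\alpha$ using the uniform bound on the stochastic integral from Section~\ref{section: stochastic integration}. I take these steps as given from the proposition.
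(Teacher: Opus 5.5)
Your proposal is correct and follows the paper's own route: the bound is read off from Proposition~\ref{proposition: tangent process ito multifractional stable motion} at $r=1$, using that the mixed stable limit puts no mass near zero. The paper phrases this as $h^{-H(t_0)-\rho}|X(t_0+h)-X(t_0)|\to\infty$ in probability and then passes to an almost surely divergent subsequence, whereas you argue by contradiction via the portmanteau theorem; your bound $\sigma_H^\alpha\ge\int_0^1(1-x)^{\alpha H-1}\,dx=1/(\alpha H)$ makes explicit the non-degeneracy of the limit, which the paper uses tacitly.
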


Thus, we found that $H(t_0)-\frac{1}{\alpha} \leq \rho_X(t_0)\leq H(t_0)$ almost surely. For the stationary fractional stable motion $L^H(t)$, the gap between $\rho_{L^H}^\text{unif}(t) = H-1/\alpha$ and $\rho_{L^H}(t) = H$ is already present. Because the Itô multifractional stable motion is a nonstationary generalization of the linear fractional stable motion, we conjecture that indeed $\rho_X(t_0)=H(t_0)$, but we have been unable to establish this mathematically.

In comparison to the classical multifractional stable motion, our results remain valid under a  weaker assumption on the modulus of continuity of the Hurst function; where the Hurst function of the classical multifractional stable motion needed to be subject to Hölder regularity in order to obtain results on the regularity of the resulting process, the Hurst function of the Itô multifractional stable motion can admit any modulus of continuity. This was an expected improvement, since this was also the case for the Gaussian counterpart \citep{loboda_mies_steland_regularity_of_multifractional_moving_avarage_processes_with_random_hurst_exponent}. On the other hand, the wavelet techniques of Ayache and Hamonier allow the regularity of the resulting process to be determined to a finer degree. Where localized Kolmogorov-Chentsov criterion only captures the regularity of the resulting process up to the (local) Hölder exponent, with wavelet techniques it is also possible to discern the logarithmic terms in the modulus of continuity, see \citep[Theorems 5.1, 5.5, 6.1, 7.1]{ayache_hamonier_linear_multifractiona_stable_motion_fine_path_properties}.

\subsection{Tangent process}\label{sec:localization}
For the classical multifractional stable motion $Y(t)$, it is known that, for $t_0$ fixed, 
$$h^{-H(t_0)}\left(Y(t_0+hr)-Y(t_0)\right) \implies L^{H(t_0)}(r)$$ in terms of finite-dimensional distributions as $h \downarrow 0$, whenever $t \mapsto H(t)$ satisfies a local Hölder condition with exponent $H(t_0)$; see \citet[Thm.~5.1]{stoev_taqqu_stochastic_properties_of_the_linear_multifractional_stable_motion}.
Here $L^H(r)$ is the linear stable fractional motion with (deterministic) fractional parameter $H \in (0,1)$, provided in the introduction.
That is, the linear fractional stable motion is the tangent process of its multifractional extension. We find that the same is true for the Itô multifractional stable motion, under an even weaker condition on $H$. Moreover, we show that the convergence not only holds for the finite-dimensional distributions, but indeed weakly in the space of continuous functions \citep{billingsley_convergence_of_probability_measures}.

\begin{proposition} \label{proposition: tangent process ito multifractional stable motion} 
Assume \eqref{eq: continuity assumption on H} and \eqref{eq: boundedness assumption on H} and assume that ${w(h)\log(h) \to 0}$ as $h \downarrow 0$. Fix $t_0 \in \RR$ and $a < 0 < b$. Then, there is a symmetric $\alpha$-stable Lévy motion $(\tilde L(x))_{x \in \RR}$, independent of $H(t_0)$, such that, as $h \downarrow 0$,
$$
\left(\frac{X(t_0+hr)-X(t_0)}{h^{H(t_0)}}\right)_{r \in [a,b]} \implies \left(\int_\RR (r-x)_+^{H(t_0)-\frac1\alpha} - (-x)_+^{H(t_0)-\frac1\alpha} \, d\tilde L(x)\right)_{r \in [a,b]},
$$
weakly in the space $C([a,b])$.
\end{proposition}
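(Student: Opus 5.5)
The plan is to establish convergence of finite-dimensional distributions and then tightness in $C([a,b])$.

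\textbf{Step 1: rescaling.} Define $\tilde L(x) = h^{-1/\alpha}(L(t_0+hx)-L(t_0))$. By self-similarity and stationary increments this is again a standard symmetric $\alpha$-stable Lévy motion, and it is adapted to the shifted and rescaled filtration. Substituting $x = t_0 + hu$ in the stochastic integral gives the exact identity
\begin{align*}
\frac{X(t_0+hr)-X(t_0)}{h^{H(t_0)}} = \int_\RR h^{H(t_0+hu)-H(t_0)}\Big((r-u)_+^{H(t_0+hu)-\frac1\alpha} - (-u)_+^{H(t_0+hu)-\frac1\alpha}\Big)\, d\tilde L(u).
\end{align*}
One first checks this for simple integrands and then passes to the limit using the $\MM^\alpha\to\Lambda^\alpha$ bound of Section \ref{section: stochastic integration}. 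The law of $\tilde L$ depends on $h$ only through the coupling; its law does not. To obtain a single limit Lévy motion independent of $H(t_0)$, I would split the integral at $u=-M$. The part $u\le -M$ is $\mathcal F_{t_0-hM}$-measurable, and $H(t_0)$ is close to $H(t_0-hM)$ by (A). The increments of $\tilde L$ on $(-M,\infty)$ are independent of $\mathcal F_{t_0-hM}$. This lets me compare with the target process $Z_{H(t_0)}(r)=\int (r-u)_+^{H(t_0)-1/\alpha}-(-u)_+^{H(t_0)-1/\alpha}\,d\tilde L(u)$, where $\tilde L$ is taken independent of $H(t_0)$.

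\textbf{Step 2: finite-dimensional convergence.} By the $\Lambda^\alpha$ bound, it suffices to show
\begin{align*}
\EE\int_\RR \big|h^{H(t_0+hu)-H(t_0)}\big(K_{H(t_0+hu)}(r,u)\big) - K_{H(t_0)}(r,u)\big|^\alpha du \to 0,
\end{align*}
where $K_H(r,u)=(r-u)_+^{H-1/\alpha}-(-u)_+^{H-1/\alpha}$.
\begin{itemize}
\item On $|u|\le M$ we have $|H(t_0+hu)-H(t_0)|\le w(hM)$. Hence $h^{H(t_0+hu)-H(t_0)} = \exp(O(w(hM)\log h))\to 1$; this is exactly where the assumption $w(h)\log h\to0$ enters. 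Moreover $K_{H'}\to K_{H(t_0)}$ pointwise, with a uniform integrable dominating function near the singularities, since $H-1/\alpha > -1/\alpha$ keeps $|\cdot|^{\alpha(H-1/\alpha)}$ integrable.
\item On $u<-M$ I use the tail bound $|K_H(r,u)|\lesssim |u|^{H-1/\alpha-1}$ together with $\alpha(\overline H - 1/\alpha -1)<-1$.
\end{itemize}
The real difficulty is that the prefactor $h^{H(t_0+hu)-H(t_0)}$ for $|u|$ of order $1/h$ is uncontrolled, so it can only be bounded by $h^{\underline H-\overline H}$. To handle this, I split at $|u|\le h^{-\eta}$ for a small $\eta$. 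On the far region the product $h^{\underline H-\overline H}|u|^{\alpha(\overline H -1/\alpha-1)+1}$ is small provided $\eta$ is chosen compatibly. Alternatively, working in original variables, the contribution from $x<t_0-\epsilon$ is $O(h)$ in $\Lambda^\alpha$ after normalisation by $h^{H(t_0)}\ge h^{\overline H}$, because $\overline H<1$ and the difference kernel is smooth there.

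Given $L^\alpha$ convergence of the kernels, convergence in $\Lambda^\alpha$, and hence in probability, follows for each $r$. Jointly with $H(t_0)$, this yields convergence of the finite-dimensional distributions to $Z_{H(t_0)}$.

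\textbf{Step 3: tightness.} I apply Proposition \ref{proposition:increment-moments} in rescaled form, or redo its proof on the rescaled kernel. This gives
\begin{align*}
\EE|Z_h(r)-Z_h(r')|^p\le C|r-r'|^{1+\kappa}
\end{align*}
uniformly in small $h$ for some $\kappa>0$. Choosing $\delta$ small and $p$ close to $\alpha$ makes $p(\underline H_{loc}-1/\alpha-\delta)>$, well, we need exponent $>1$ after the normalisation mismatch $h^{\min H - H(t_0)}$, which (A) and $w\log h\to0$ control. Together with $Z_h(0)=0$, Kolmogorov's tightness criterion applies.

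I expect the main obstacle to be Step 3. Proposition \ref{proposition:increment-moments} gives $\|\cdot\|_{\LL^p}^p\le C|t-s|$ with exponent exactly $1$, not $1+\kappa$. Kolmogorov therefore needs an extra factor, $|r-r'|^{p(H(t_0)-1/\alpha-\delta)}$ coming from the scaling beyond the $\delta$-loss. I would first try to get this by using the decomposition into small and large jumps (Lemma \ref{lemma: integral representation as integral against small and big jumps}) and bounding higher moments of the small-jump part, so as to gain $p>1$-type integrability.
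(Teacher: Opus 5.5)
Your architecture matches the paper's: finite-dimensional convergence, then a moment criterion for tightness, with $w(h)\log h\to0$ used to neutralise factors $h^{\pm w}$. But two steps do not go through as written.

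\textbf{Finite-dimensional convergence.} Your ``exact identity'' in Step~1 puts the random factor $h^{-H(t_0)}$ inside $\int\cdot\,d\tilde L(u)$. On $u<0$ this factor is not adapted, because $H(t_0)$ is $\mathcal F_{t_0}$-measurable but not $\mathcal F_{t_0+hu}$-measurable. Likewise, $\int K_{H(t_0)}(r,u)\,d\tilde L(u)$ with your rescaled $\tilde L$ is an anticipating integral, and that $\tilde L$ is not independent of $H(t_0)$. The bound $\|\int F\,dL\|_{\Lambda^\alpha(\Omega)}\le C\|F\|_{\MM^\alpha(\RR)}$ is available only for adapted $F$. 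So the kernel criterion of Step~2 does not give convergence, and ``jointly with $H(t_0)$'' skips the asymptotic independence that must be proved.

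Freezing at $t_0-hM$ with $M$ fixed is not enough. On $x\in[t_0-\epsilon,t_0-hM]$ you can make the normalisation adapted only through $h^{-H(t_0)}\le h^{-w(\epsilon)}h^{-H(t_0-\epsilon)}$, which is harmless only if $\epsilon=\epsilon(h)\to0$. On $x<t_0-\epsilon$ you must use the crude bound $h^{-\overline H}$, which forbids $\epsilon$ from being too small. The paper therefore freezes at $t_0-h^q$ with $0<q<(1-\overline H)/(1-\underline H)$, in three steps.
First, it discards $x<t_0-h^q$ via \eqref{eq: lemma bounding difference kernel norms, part 1}, with error $h^{\alpha[1-\overline H-q(1-\underline H)]}$.
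Second, it replaces $h^{-H(t_0)}$ pathwise by the $\mathcal F_{t_0-h^q}$-measurable $h^{-H(t_0-h^q)}$, with relative error $w(h^q)|\log h|h^{-w(h^q)}\to0$.
Third, it replaces $H(x)$ by $H(t_0-h^q)$ in the kernel via Lemma~\ref{lemma: technical lemma to change exponent}; your dominated-convergence argument could do this job once everything is adapted.
Only then is the integrand $\mathcal F_{t_0-h^q}$-measurable, so $L$ on $(t_0-h^q,\infty)$ can be replaced in law by an independent copy. Undoing the three steps and using self-similarity then yields a limit driven by $\tilde L$ independent of $H(t_0)$.

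Separately, your rescaled far-field bound $h^{\underline H-\overline H}|u|^{-\alpha(1-\overline H)}$ treats the two occurrences of $H(x)$ independently. In general no admissible $\eta<1$ then exists, so use the original-variable estimate instead.

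\textbf{Tightness.} You leave Step~3 open, and the obstacle you name is not the real one. The proof of Proposition~\ref{proposition:increment-moments} gives $\bigl\|(X(t)-X(s))/|t-s|^{\min_{[s,t]}H}\bigr\|^\alpha_{\Lambda^\alpha(\Omega)}\le C|t-s|^{-3\alpha w(\epsilon)}$. So increments scale like $|t-s|^{H}$, not $|t-s|^{H-1/\alpha}$; the $1/\alpha$ is lost only when passing to pathwise regularity. Since $\underline H>1/\alpha$, you may take $p\in(1/\underline H,\alpha)$. The embedding $\Lambda^\alpha(\Omega)\hookrightarrow\LL^p(\Omega)$ then gives a Kolmogorov exponent of about $p\underline H>1$. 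Higher moments of the small-jump part are neither needed nor helpful, since the large jumps force $p<\alpha$ anyway.

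The real difficulty is uniformity in $h$: rescaling the proposition with a fixed $\delta$ leaves a factor $h^{-p\delta}$ that blows up. The paper instead reruns \eqref{eq: bound weak alpha moment, 1}--\eqref{eq: bound weak alpha moment, 3} with $\epsilon=h^q$.
The $D_\epsilon$ term becomes $h^{\alpha(1-\overline H-q(1-\underline H))}|r-s|^{\alpha(1-\overline H)}$.
The other two terms carry $(h|r-s|)^{-3\alpha w(h^q)}$ and $(h|r-s|)^{-\alpha w(h(b-a))}$.
The normalisation mismatch satisfies $h^{H(t_0+hs)-H(t_0)}|r-s|^{H(t_0+hs)}\le h^{-w(h(b-a))}|r-s|^{\underline H}$.
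All these powers of $h$ stay bounded by $w(h)\log h\to0$. Hence, for $h<\delta$,
\[
\EE|Z_h(r)-Z_h(s)|^p\le C|r-s|^{p\underline H-3pw(\delta^q)},
\]
and the exponent exceeds $1$ once $\delta$ is small.
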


While Theorem \ref{theorem: equality holder exponents} only requires continuity of the function $t \mapsto H(t)$, Proposition \ref{proposition: tangent process ito multifractional stable motion} imposes an upper bound on the modulus, i.e.\ an additional smoothness assumption. 
Nevertheless, the bound $w(h) \ll 1/|\log h|$ is very weak and satisfied, for example, by a Hölder function of any order.
We also would like to point out that the analogue of Theorem \ref{theorem: equality holder exponents} for the Brownian case \citep  {loboda_mies_steland_regularity_of_multifractional_moving_avarage_processes_with_random_hurst_exponent} does impose an upper bound on $w(h)$, as the proof of the roughness therein is based on the tangent process. In this sense, the exact local Hölder exponent of the multifractional stable motion can be obtained under even weaker conditions than for the Gaussian case.

\subsection{Open problems}

As explained above, it remains open to establish the exact value of the pointwise Hölder exponent of the Itô multifractional stable motion.
Furthermore, multivariate fractional Gaussian processes have been employed to improve accuracy of volatility forecasts \cite{bibinger_modeling_2026}, which motivates the study of multivariate extensions of the multifractional stable motion.

Another avenue for future research is statistical inference for the Hurst parameter $H(t)$, either parametrically or nonparametrically. 
The estimation of the linear fractional stable motion has been studied by \citep{Mazur2018, Ljungdahl2019, Ljungdahl2020, mies_estimation_2023}, estimation of the (classical) multifractional stable motion is studied by \citep{dangEstimationMultifractionalFunction2020, ayacheLinearMultifractionalStable2015, ayache_uniformly_2017}.
These latter three studies require that $t\mapsto H(t)$ is $\rho$-Hölder continuous, and $\rho>\sup_t H(t)$, which is the regime where the sample path regularity of the classical multifractional stable motion is described by the value of $H(t)$.
However, the rate of convergence established therein is a multiple of $\rho-\sup_{t}H(t)$, and thus vacuous in the rough case where $\rho< \sup_t H(t)$.
For the Gaussian case, it has recently been shown that the Itô-mBm allows for streamlined nonparametric estimation \citep{mies_rough_2025} compared to the classical mBm \citep{shen_hurst_2020}, achieving standard nonparametric rates of convergence without a lower bound on $\rho$
It can thus be conjectured that the same statistical simplifications emerge in the stable case.

\appendix
\section{Proofs} \label{section: proofs}

In this section, constants are denoted with a capital $C$. Unless specified otherwise, its dependencies are listed (e.g. $C(\epsilon), C(\alpha, \underline{H}, \overline{H})$). The value of constants may vary from line to line. To show \eqref{eq: lower bound holder regularity} we apply the localized Kolmogorov-Chentsov criterion developed in \citet{loboda_mies_steland_regularity_of_multifractional_moving_avarage_processes_with_random_hurst_exponent}. Below we state Theorem 2.1 from this article in dimension $d = 1$.

\begin{theorem} \label{theorem: refinement kolmogorov chentov theorem}
Let $(Y(t))_{t \in [0, T]}$ and $(a(t))_{t \in [0, T]}$ be stochastic processes taking values in $\RR$ and $(0, 1)$ respectively. Assume that $\inf_{t \in B} a(t)$ is measurable for any open or closed set $B \subseteq [0,T]$ and that $(a(t))_{t \in [0, T]}$ is lower semi-continuous and $\inf_{t \in [0,T]} a(t) > 0$. Suppose that there are $p > 0$, $\epsilon > 0$ and a constant $C > 0$ such that for all $s, t \in [0, T]$ with $|t - s| \leq \epsilon$,
$$
\EE\left|\frac{Y(t) - Y(s)}{|t-s|^{\min_{r \in [s \wedge t, s \vee t]} a(r)}}\right|^p \leq C |t-s|.
$$
Then $(Y(t))_{t \in [0, T]}$ has a modification $(\tilde Y(t))_{t \in [0, T]}$ such that
$$
\PP\left(\forall \gamma > 0 \,\, \forall [a,b] \subseteq [0, T] : \sup_{\substack{s, t \in [a, b] \\ s \neq t}} \frac{|\tilde Y(t) - \tilde Y(s)|}{|t-s|^{\min_{r \in [a,b]} a(r) - \gamma}} < \infty \right) = 1,
$$
as a consequence,
$$
\PP\left(\forall [a,b] \subseteq [0,T] : \rho^\text{unif}_{\tilde Y}([a,b]) \geq \min_{t \in [a,b]} a(t)\right) = 1.
$$
\end{theorem}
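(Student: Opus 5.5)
The plan is a dyadic chaining argument in the spirit of the classical Kolmogorov--Chentsov proof. The key point is that the random exponent is \emph{localized to each dyadic interval} before Markov's inequality is applied. This gives a single almost sure event on which every interval $[a,b]$ is controlled simultaneously.

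\textbf{Step 1: dyadic increments.} Assume without loss of generality that $T=1$. Put $t_{n,k}=k2^{-n}$ and $m_{n,k}=\min_{r\in[t_{n,k},t_{n,k+1}]}a(r)$. This minimum is attained by lower semicontinuity and is measurable by assumption. Define
\[
Z_{n,k}=\frac{|Y(t_{n,k+1})-Y(t_{n,k})|}{2^{-n m_{n,k}}}.
\]
For $2^{-n}\le\epsilon$ the hypothesis gives $\EE|Z_{n,k}|^p\le C2^{-n}$.

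\textbf{Step 2: Borel--Cantelli.} Fix $\gamma>0$. Markov's inequality gives $\PP(Z_{n,k}>2^{n\gamma})\le C2^{-n}2^{-n\gamma p}$. Summing over the $2^n$ values of $k$ yields $C2^{-n\gamma p}$, which is summable in $n$. By Borel--Cantelli there is a random $N_\gamma(\omega)$ such that for all $n\ge N_\gamma$ and all $k$,
\[
|Y(t_{n,k+1})-Y(t_{n,k})|\le 2^{-n(m_{n,k}-\gamma)}.
\]
I intersect these events over $\gamma\in\{1/j:j\in\mathbb N\}$ to obtain one event $\Omega_0$ of probability one. It suffices to treat $\gamma<\inf_t a(t)$, since larger $\gamma$ only weakens the claim.

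\textbf{Step 3: chaining on an arbitrary interval.} Take dyadic $s<t$ in $[a,b]$ with $2^{-n-1}<t-s\le 2^{-n}$. The standard chain from $s$ to $t$ uses at most two dyadic intervals of each level $j\ge n$. All of them are contained in $[s,t]\subseteq[a,b]$, so each has $m_{j,k}\ge A:=\min_{[a,b]}a$. Consequently, for $n\ge N_\gamma$,
\[
|Y(t)-Y(s)|\le 2\sum_{j\ge n}2^{-j(A-\gamma)}\le C(A,\gamma)\,|t-s|^{A-\gamma}.
\]
Applying this with $A$ replaced by $\inf_t a(t)>0$ shows that on $\Omega_0$ the restriction of $Y$ to the dyadics is uniformly continuous. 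I then define $\tilde Y$ as its continuous extension, setting $\tilde Y=0$ off $\Omega_0$.

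\textbf{Step 4: modification and conclusion.} To check that $\tilde Y$ is a modification, note that $|t-s|^{p\min a}\le 1$ for $|t-s|\le 1$. The hypothesis therefore gives $\EE|Y(t)-Y(s)|^p\le C|t-s|$, so $Y$ is continuous in probability. Hence $\tilde Y(t)=Y(t)$ almost surely for each $t$, as the almost sure and in-probability limits of $Y$ along dyadics converging to $t$ coincide.

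The Hölder bound of Step 3 passes from dyadic $s,t\in[a,b]$ to all $s,t\in[a,b]$ by continuity of $\tilde Y$. I use that the dyadics are dense in $[a,b]$ and that the exponent $A-\gamma$ depends only on $[a,b]$; degenerate intervals are trivial.

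For pairs with $|t-s|>2^{-N_\gamma}$, the ratio is bounded by $2\sup_{[0,1]}|\tilde Y|\,2^{N_\gamma}$, which is finite by continuity on a compact set. Combining both regimes gives the supremum bound for every $[a,b]$ on the single event $\Omega_0$. Letting $\gamma\downarrow0$ then yields the statement about $\rho^{\text{unif}}_{\tilde Y}([a,b])$.

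\textbf{Main obstacle.} The main point to handle carefully is Step 3: ensuring that the chain between $s,t\in[a,b]$ only uses dyadic intervals inside $[a,b]$. Only then does the local minimum $\min_{[a,b]}a$, rather than a global one, govern the bound. For this reason I restrict the chaining to dyadic endpoints inside $[a,b]$ and extend to all points by continuity only afterwards.
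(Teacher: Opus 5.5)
Your proof is correct. The paper gives no argument of its own for this statement. It imports it as Theorem 2.1 of Loboda, Mies and Steland (the case $d=1$), so what you supply is a self-contained proof along the classical dyadic Kolmogorov--Chentsov route.

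The decisive choice is exactly right. You normalize each dyadic increment by the minimum of $a$ over that dyadic interval before applying Markov's inequality. As a result, one Borel--Cantelli event, intersected over $\gamma=1/j$, controls every interval $[a,b]$ simultaneously. This also makes visible where each hypothesis is used:
\begin{itemize}
\item measurability of $\inf_B a$ is needed only for closed dyadic intervals;
\item lower semicontinuity is needed only so that the minima are attained;
\item the power $|t-s|^1$ on the right-hand side is what costs the arbitrary $\gamma>0$.
\end{itemize}

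The one step you should write out rather than call ``standard'' is the chain in Step 3. A common textbook chain rounds both endpoints down. That chain uses dyadic intervals to the left of $s$, possibly outside $[a,b]$, and would only give the minimum of $a$ over a larger set. This is exactly the obstacle you flagged, and your proposal asserts rather than shows that it is avoided.

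Round inward instead. Let $s,t$ be multiples of $2^{-m}$ with $2^{-n-1}<t-s\le 2^{-n}$, and put $s_j=\lceil 2^j s\rceil 2^{-j}$ and $t_j=\lfloor 2^j t\rfloor 2^{-j}$. Since $t-s>2^{-n-1}$ forces a multiple of $2^{-n-1}$ in $[s,t]$, you get $s=s_m\le\dots\le s_{n+1}\le t_{n+1}\le\dots\le t_m=t$. Each nondegenerate step $[s_{j+1},s_j]$ or $[t_j,t_{j+1}]$ is a single level-$(j+1)$ dyadic interval contained in $[s,t]$. The middle piece $[s_{n+1},t_{n+1}]$ is a union of at most two level-$(n+1)$ intervals. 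Hence, for $n\ge N_\gamma$,
\[
|Y(t)-Y(s)|\le 2\sum_{j\ge n+1}2^{-j(A-\gamma)},
\]
which is the bound you claim, and everything after that goes through as you wrote it.
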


To bound the increments, we use the following lemma, which only concerns deterministic, real-valued integrals.

\begin{lemma} \label{lemma: bound on alpha norm difference kernel ito mbm}
Fix $\alpha \in (0, 2)$, $0 < \underline{H} < \overline{H} < 1$ and a modulus of continuity $w : \RR_{\geq0} \to \RR_{\geq 0}$. There exists a constant $C(\alpha, \underline{H}, \overline{H})$ such that for all $\epsilon \in (0, 1)$, all functions $H:\RR \to [\underline{H}, \overline{H}]$ admitting $w$ as a modulus of continuity and all $h \in (0, \frac12$),
\begin{align}
\int_{-\infty}^{t-\epsilon} \left|(t+h-x)^{H(x)-\frac1\alpha} - (t-x)^{H(x)-\frac1\alpha}\right|^\alpha \, dx &\leq C(\alpha, \underline{H}, \overline{H}) \, \epsilon^{\alpha(\underline{H}-1)} h^\alpha, \label{eq: lemma bounding difference kernel norms, part 1} \\
\int_{t-\epsilon}^t \left|(t+h-x)^{H(x)-\frac1\alpha} - (t-x)^{H(x)-\frac1\alpha}\right|^\alpha \, dx &\leq C(\alpha, \underline{H}, \overline{H}) \, h^{\alpha(H(t)-w(\epsilon))}, \label{eq: lemma bounding difference kernel norms, part 2} \\
\int_t^{t+h} (t+h-x)^{\alpha H(x) - 1} \, dx &\leq C(\alpha, \underline{H}, \overline{H}) \, h^{\alpha(H(t)-w(h))} \label{eq: lemma bounding difference kernel norms, part 3}.
\end{align}
\end{lemma}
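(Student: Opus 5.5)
The three bounds are elementary estimates on power kernels. In each we will freeze the exponent $\beta = H(x)-\frac1\alpha \in [\underline{H}-\frac1\alpha, \overline{H}-\frac1\alpha]$ pointwise in $x$. The only place where the regularity of $H$ enters is in comparing $H(x)$ with $H(t)$ when $|x-t|$ is small, through the modulus $w$. Throughout we use the substitution $u = t-x > 0$ and write $\beta(x)=H(x)-\frac1\alpha$, so that $\beta(x)\in(-\frac1\alpha,1-\frac1\alpha)$ uniformly.

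\textbf{First bound \eqref{eq: lemma bounding difference kernel norms, part 1}.} For $u \ge \epsilon$ we apply the mean value theorem in the spatial variable:
$$|(u+h)^{\beta}-u^{\beta}| \le |\beta|\, h \,\xi^{\beta-1}$$
for some $\xi\in(u,u+h)$. Since $\beta-1<0$, we have $\xi^{\beta-1}\le u^{\beta-1}$. With $|\beta|\le C$ this gives an integrand bounded by $C h^\alpha u^{\alpha(\beta(x)-1)}$. We then need $u^{\alpha(\beta-1)} \le C u^{\alpha(\underline{H}-1)-1}$, uniformly in $\beta$:
\begin{itemize}
\item for $u\ge 1$ it suffices that $\alpha(\beta-1)\le \alpha(\overline{H}-1)-1$;
\item for $\epsilon\le u<1$ we use $\alpha(\beta-1)\ge \alpha(\underline{H}-1)-1$.
\end{itemize}
Integrating over $u\ge\epsilon$ then yields $\epsilon^{\alpha(\underline{H}-1)}h^\alpha$ up to constants. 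Note that $\alpha(\underline H-1)-1<-1$, so the tail at infinity is integrable, while near $\epsilon$ the bound gives exactly $\epsilon^{\alpha(\underline H-1)}$.

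\textbf{Third bound \eqref{eq: lemma bounding difference kernel norms, part 3}.} Here $x\in[t,t+h]$, so $|H(x)-H(t)|\le w(h)$. Since $t+h-x\le h<1$, we have $(t+h-x)^{\alpha H(x)-1}\le (t+h-x)^{\alpha(H(t)-w(h))-1}$. Integrating gives
$$\frac{h^{\alpha(H(t)-w(h))}}{\alpha(H(t)-w(h))}.$$
To keep the constant uniform, we would first note that if $H(t)-w(h)$ is small the claimed bound is trivial. More cleanly, we can use $\alpha H(x)-1 \ge \alpha\underline H-1>-1$ to cap the exponent below: replace the exponent by $\max(\alpha(H(t)-w(h)),\alpha\underline H)$. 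The integral is then at most $C h^{\max(\cdot)}\le C h^{\alpha(H(t)-w(h))}$, since $h<1$.

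\textbf{Second bound \eqref{eq: lemma bounding difference kernel norms, part 2}.} This is the main step. For $u\in(0,\epsilon]$ we have $|\beta(x)-\beta(t)|\le w(\epsilon)$, so $\beta(x)\ge \beta(t)-w(\epsilon)=:\beta_*$, and we split the range at $u=h$.
\begin{itemize}
\item On $u\le h$ (when $h<\epsilon$) we bound the difference crudely by $(u+h)^{\beta}+u^{\beta}$. Raising to the $\alpha$ and integrating, as in the third bound, gives $C h^{\alpha\beta_*+1}=Ch^{\alpha(H(t)-w(\epsilon))}$. When $\beta$ is negative we use $\alpha\beta>-1$ for integrability. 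The lower-capping trick is again applied to keep the constants uniform, using that $u,h\le 1$ so smaller exponents give larger powers.
\item On $h<u\le\epsilon$ we use the mean value bound $C h^\alpha u^{\alpha(\beta(x)-1)}\le Ch^\alpha u^{\alpha(\beta_*-1)}$, valid since $u<1$. Because $\alpha(\beta_*-1)<-1$ (as $\beta_*<1-\frac1\alpha$), integrating from $h$ gives $C h^{\alpha}h^{\alpha(\beta_*-1)+1}=Ch^{\alpha(H(t)-w(\epsilon))}$.
\end{itemize}

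The delicate point is uniformity of the constant $1/(1-\alpha(\beta_*-1)-1)$. This is fine because $\beta_*\le\overline H-\frac1\alpha<1-\frac1\alpha$, so the denominator is bounded below by $\alpha(1-\overline H)$. The lower-range issue (if $\beta_*$ drops below $-\frac1\alpha$) is handled by replacing $\beta_*$ with $\max(\beta_*,\underline H-\frac1\alpha)$, which only weakens the bound, since $h<1$.
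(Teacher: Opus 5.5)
Your proposal is correct and follows essentially the paper's argument. You use the mean value theorem with an $\underline H$/$\overline H$ split at base $1$ for the first bound, and for the third bound you lower the exponent via $w$ while keeping it capped below by $\underline H$. Your split at $u=h$ for the second bound is the paper's substitution $x=t+h\hat x$ with a $\beta$-uniform integrable majorant, written out in the original variable.

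There is one slip in the first bound. The claim $u^{\alpha(\beta-1)}\le C u^{\alpha(\underline H-1)-1}$ uniformly in $\beta$ is false for large $u$. On $u\ge 1$ you must use the majorant $u^{\alpha(\overline H-1)-1}$, as your bullet in fact does. Its integrability comes from $\alpha(\overline H-1)-1<-1$, not from the corresponding inequality for $\underline H$. The resulting term $Ch^\alpha$ is then absorbed into $C\epsilon^{\alpha(\underline H-1)}h^\alpha$ because $\epsilon<1$.
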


\begin{proof}
Apply the mean value theorem to obtain $\xi_{t,h, x} \in [0, h]$ so that the left hand side of \eqref{eq: lemma bounding difference kernel norms, part 1} equals
\begin{align*} 
& h^\alpha \int_{-\infty}^{t-\epsilon}\left|H(x) - \frac1\alpha\right|^\alpha (t + \xi_{t,h,x} - x)^{\alpha (H(x) - 1) - 1} \, dx \\
\leq&\, C(\alpha, \underline{H}, \overline{H}) h^\alpha \int_{-\infty}^{t-\epsilon} (t - x)^{\alpha(\underline{H} - 1) - 1} \mathbbm{1}_{\{t + \xi_{t,h,x} - x \leq 1\}} + (t - x)^{\alpha(\overline{H} - 1) - 1} \mathbbm{1}_{\{t + \xi_{t,h,x} - x > 1\}} \, dx \\
\leq&\,  C(\alpha, \underline{H}, \overline{H}) h^\alpha \left(\int_{-\infty}^{t-\epsilon} (t-x)^{\alpha(\underline{H}-1)-1} \, dx + \int_{-\infty}^{t-\frac12} (t-x)^{\alpha(\overline{H}-1)-1} \, dx\right) \\
\leq&\,  C(\alpha, \underline{H}, \overline{H})\epsilon^{\alpha(\underline{H}-1)} \, h^\alpha. \\
\end{align*}
For the second term, substitute $x = t+h\hat x$ and use the fact that $H(t+h\hat x) \geq H(t) - w(\epsilon)$ whenever $t+h\hat x \in [t-\epsilon, t]$ to find that the left hand side of \eqref{eq: lemma bounding difference kernel norms, part 2} equals
\begin{align*}
& \int_{-\epsilon h^{-1}}^0 h^{\alpha H(t+h\hat x)} \left|(1-\hat x)^{H(t+h\hat x) - \frac1\alpha} - (-\hat x)^{H(t+h\hat x) - \frac1\alpha}\right|^\alpha \, d\hat x \\
\leq&\, h^{\alpha(H(t) - w(\epsilon))} \int_{-\infty}^0 \left|(1-\hat x)^{H(t+h\hat x) - \frac1\alpha} - (-\hat x)^{H(t+h\hat x) - \frac1\alpha}\right|^\alpha \, d\hat x \\
\leq&\, C(\alpha, \underline{H}, \overline{H}) h^{\alpha(H(t)-w(\epsilon))}.
\end{align*}
Finally,
$$
\int_t^{t+h} (t+h-x)^{\alpha H(x) - 1} \, dx \leq \int_t^{t+h} (t+h-x)^{\alpha \min_{x \in [t, t+h]} H(x)-1} \, dx \leq \frac1{\alpha \underline{H}} h^{\alpha(H(t) - w(h))}.
$$
\end{proof}

\begin{proof}[Proof of Proposition \ref{proposition:increment-moments}]
Let $\delta > 0$ be arbitrary.
Choose some $\epsilon \in (0,1)$. Assume that $s, t \in \RR$ are such that $0 < t-s \leq \epsilon$ and decompose
\begin{align*}
    X(t) - X(s) &= \underbrace{\int_{-\infty}^{s-\epsilon} (t-x)^{H(x) - \frac1\alpha} - (s-x)^{H(x) - \frac1\alpha} \, dL(x)}_{D_\epsilon}\\
    &+ \underbrace{\int_{s-\epsilon}^s (t-x)^{H(x) - \frac1\alpha} - (s-x)^{H(x) - \frac1\alpha} \, dL(x)}_{E_\epsilon}\\
    &+ \underbrace{\int_s^t (t-x)^{H(x) - \frac1\alpha} \, dL(x)}_{F}.\\
\end{align*}
From Lemma \ref{lemma: bound on alpha norm difference kernel ito mbm}, we find that $\epsilon$ affects the relevant exponents by at most $\omega(\epsilon)$. 
To make this effect sufficiently small, we choose $\epsilon>0$ such that so that $3w(\epsilon) \leq \delta$.
Then, the weak Lebesgue norm introduced in \eqref{eq: weak L alpha norm} is bounded as
\begin{align} \label{eq: decomposition alpha norm normalized difference ito mbm}
\begin{split}
&\left\|\frac{X(t)-X(s)}{|t-s|^{\min_{r \in [s, t]}H(r)}}\right\|^\alpha_{\Lambda^\alpha(\Omega)}\\
\leq& \, C(\alpha) \left(\left\|\frac{D_\epsilon}{|t-s|^{H(s)}}\right\|^\alpha_{\Lambda^\alpha(\Omega)} + \left\|\frac{E_\epsilon}{|t-s|^{H(s)}}\right\|^\alpha_{\Lambda^\alpha(\Omega)} + \left\|\frac{F}{|t-s|^{H(s)}}\right\|^\alpha_{\Lambda^\alpha(\Omega)}\right).
\end{split}
\end{align}
We bound the terms individually. For the first term, using \eqref{eq: lemma bounding difference kernel norms, part 1} of Lemma \ref{lemma: bound on alpha norm difference kernel ito mbm},
\begin{align} \label{eq: bound weak alpha moment, 1}
\begin{split}
\left\||t-s|^{-H(s)}D_\epsilon\right\|^\alpha_{\Lambda^\alpha(\Omega)} &\leq |t-s|^{-\alpha\overline{H}}\left\|D_\epsilon\right\|^\alpha_{\Lambda^\alpha(\Omega)} \\
&\leq C(\alpha) |t-s|^{-\alpha\overline{H}} \EE\int_{-\infty}^{s-\epsilon}\left|(t-x)^{H(x) - \frac1\alpha} - (s-x)^{H(x) - \frac1\alpha}\right|^\alpha \, dx\\
&\leq C(\alpha, \underline{H}, \overline{H}) \epsilon^{\alpha(\underline{H}-1)} |t-s|^{\alpha(1-\overline{H})}\\
&= C(\alpha, \underline{H}, \overline{H}, \epsilon)|t-s|^{\alpha(1-\overline{H})}.
\end{split}
\end{align}
To bound the second term, note that $H(s) \leq H(s-\epsilon) + w(\epsilon)$ almost surely. It follows that $\left\||t-s|^{-H(s)} E_\epsilon\right\|^\alpha_{\Lambda^\alpha(\Omega)} \leq |t-s|^{-\alpha w(\epsilon)} \left\||t-s|^{-H(s-\epsilon)} E_\epsilon\right\|^\alpha_{\Lambda^\alpha(\Omega)}$. Because $|t-s|^{-H(s-\epsilon)}$ is $\mathcal{F}_{s-\epsilon}$-measurable, we may bring this term inside the stochastic integral. By applying \eqref{eq: lemma bounding difference kernel norms, part 2} of Lemma \ref{lemma: bound on alpha norm difference kernel ito mbm} we find that
\begin{align} \label{eq: bound weak alpha moment, 2}
\begin{split}
    &\left\||t-s|^{-H(s)} E_\epsilon\right\|^\alpha_{\Lambda^\alpha(\Omega)} \\
    \leq& \, |t-s|^{-\alpha w(\epsilon)} \left\|\int_{s-\epsilon}^s |t-s|^{-H(s-\epsilon)} \left((t-x)^{H(x) - \frac1\alpha} - (s-x)^{H(x) - \frac1\alpha}\right) \, dL(x)\right\|^\alpha_{\Lambda^\alpha(\Omega)} \\
    \leq& \, C(\alpha) |t-s|^{-\alpha w(\epsilon)} \EE\left\{|t-s|^{-\alpha H(s-\epsilon)}\int_{s-\epsilon}^s  \left|(t-x)^{H(x) - \frac1\alpha} - (s-x)^{H(x) - \frac1\alpha}\right|^\alpha \, dx \right\}\\
    \leq& \, C(\alpha, \underline{H}, \overline{H}) |t-s|^{-\alpha w(\epsilon)}  \EE\left\{ |t-s|^{\alpha(H(s)- H(s-\epsilon) - w(\epsilon))} \right\} \\
    \leq& \, C(\alpha, \underline{H}, \overline{H})|t-s|^{-3\alpha w(\epsilon)}.
\end{split}
\end{align}
For the final term, $|t-s|^{-H(s)}$ is $\mathcal{F}_s$-measurable. So we can bring it inside of the integral without further complications. Using \eqref{eq: lemma bounding difference kernel norms, part 3} of Lemma \ref{lemma: bound on alpha norm difference kernel ito mbm},
\begin{align} \label{eq: bound weak alpha moment, 3}
\begin{split}
    \left\||t-s|^{-H(s)} F\right\|^\alpha_{\Lambda^\alpha(\Omega)} &= \left\|\int_s^t |t-s|^{-H(s)} (t-x)^{H(x)-\frac1\alpha} \, dL(x)\right\|^\alpha_{\Lambda^\alpha(\Omega)} \\
    &\leq C(\alpha) \, \EE\left\{|t-s|^{-\alpha H(s)} \int_s^t (t-x)^{\alpha H(x)-1} \, dx\right\} \\
    &\leq C(\alpha, \underline{H}, \overline{H}) \, |t-s|^{-\alpha w(|t-s|)} \\
    &\leq C(\alpha, \underline{H}, \overline{H}) |t-s|^{-\alpha w(\epsilon)}.
\end{split}
\end{align}
Plugging \eqref{eq: bound weak alpha moment, 1}, \eqref{eq: bound weak alpha moment, 2} and \eqref{eq: bound weak alpha moment, 3} into \eqref{eq: decomposition alpha norm normalized difference ito mbm} reveals that
\begin{align*}
\left\|\frac{X(t)-X(s)}{|t-s|^{\min_{r \in [s, t]}H(r)}}\right\|^\alpha_{\Lambda^\alpha(\Omega)} &\leq C(\alpha, \underline{H}, \overline{H}, \epsilon) \left(|t-s|^{\alpha(1-\overline{H})} + |t-s|^{-3\alpha w(\epsilon)} + |t-s|^{-\alpha w(\epsilon)}\right) \\
&\leq C(\alpha, \underline{H}, \overline{H}, \epsilon) |t-s|^{-3 \alpha w(\epsilon)}.
\end{align*}
Thus, for $\epsilon$ small enough such that $3w(\epsilon) \leq \delta$,
\begin{align*}
    \left\|\frac{X(t)-X(s)}{|t-s|^{\min_{r \in [s, t]}H(r) - \frac1\alpha - \delta}}\right\|^\alpha_{\Lambda^\alpha(\Omega)} &\leq C(\alpha, \underline{H}, \overline{H}, \epsilon) |t-s|^{1 + \alpha(\delta - 3w(\epsilon))} \leq C(\alpha, \underline{H}, \overline{H}, \epsilon) |t-s|.
\end{align*}
This proves the first statement. The second follows readily from the first and the continuous embedding $\Lambda^\alpha(\Omega) \hookrightarrow \mathbb{L}^p(\Omega)$ if $p < \alpha$. Indeed, for any $\delta>0$ and $\frac\alpha{1+\alpha\delta/2} < p < \alpha$,
    \begin{align*}
        \left\|\frac{X(t)-X(s)}{|t-s|^{\min_{r\in[s, t]} H(r) - \frac{1}{\alpha}-\delta}}\right\|_{\mathbb{L}^p(\Omega)}^p 
        &\leq C(\alpha, p) \left\|\frac{X(t)-X(s)}{|t-s|^{\min_{r\in[s, t]} H(r) - \frac{1}{\alpha} - \frac{\delta}{2}}}\right\|_{\Lambda^\alpha(\Omega)}^p |t-s|^{\frac{p\delta}{2}}\\
        &\leq C(\alpha, \underline{H}, \overline{H}, \epsilon, p) |t-s|^{\frac{p}{\alpha}+\frac{p\delta}{2}} \\
        &\leq C(\alpha, \underline{H}, \overline{H}, \epsilon, p) |t-s|.
    \end{align*}
\end{proof}

The proof for the roughness part of Theorem \ref{theorem: equality holder exponents} uses a decomposition of stochastic integrals $\int_a^b F(x) \, dL(x)$ against the Lévy motion $L$ into a sum of integrals against the small and big jumps of its associated Poisson point process $N$. Let $\gamma > 0$. For general integrands $F$ (not required to be adapted), the integral against big jumps is defined pathwise as the almost surely finite sum
$$
\int_a^b \int_{\RR \setminus (-\gamma, \gamma)} y F(x) \, dN(x,y) = \sum_{x \in (a,b]} \Delta L(x) F(x) \mathbbm{1}_{\RR \setminus (-\gamma, \gamma)}(\Delta L(x)),
$$
where $\Delta L(x) = L(x) - \lim_{y \uparrow x} L(y)$ is the jump process associated to $L$. For adapted left continuous square integrable integrands $F \in \MM^2((a,b])$, the integral $\int_a^b \int_{(-\gamma, \gamma)} y F(x) \, d\tilde N(x,y)$, with $\tilde N$ the compensated Poisson point process, is defined as the $\LL^2(\Omega)$ limit of integrals of simple processes. These integrals satisfy an Itô isometry,
$$
\EE\left|\int_a^b \int_{(-\gamma, \gamma)} y F(x) \, d\tilde N(x,y)\right|^2 = \EE\int_a^b\int_{(-\gamma,\gamma)} |yF(x)|^2 \, dx \, d\mu(y) = C(\alpha, \gamma) \EE\int_a^b |F(x)|^2 \, dx.
$$

\begin{lemma} \label{lemma: integral representation as integral against small and big jumps}
If $F \in \MM^2((a,b])$ is left continuous such that $\sup_{x\in(a,b]}|F(x)| \in \LL^2(\Omega)$, then
$$
\int_a^b F(x) \, dL(x) = \int_a^b \int_{(-\gamma,\gamma)} y F(x) \, d\tilde N(x, y) + \int_a^b \int_{\RR\setminus(-\gamma,\gamma)} y F(x) \, dN(x, y) \quad a.s.
$$
\end{lemma}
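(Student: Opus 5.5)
The plan is to prove the identity first for simple integrands and then pass to the limit by approximation, using two different convergence mechanisms. For a simple process $F = \sum_k \xi_k \mathbbm{1}_{(x_{k-1},x_k]}$ with $\xi_k$ being $\mathcal{F}_{x_{k-1}}$-measurable, the Lévy–Itô decomposition gives, pathwise and almost surely,
$$
L(x_k)-L(x_{k-1}) = \int_{x_{k-1}}^{x_k}\int_{(-\gamma,\gamma)} y\, d\tilde N(x,y) + \int_{x_{k-1}}^{x_k}\int_{\RR\setminus(-\gamma,\gamma)} y\, dN(x,y).
$$
Multiplying by $\xi_k$ and summing over $k$ gives the claim for simple $F$. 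Here we use that the small-jump integral of a simple adapted integrand is, by definition, $\sum_k \xi_k(\cdot)$ applied to the compensated increments, and that the big-jump integral is the pathwise finite sum.

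For general $F$, approximate by left-point simple processes $F_n(x) = \sum_k F(x_{k-1}^n)\mathbbm{1}_{(x_{k-1}^n, x_k^n]}(x)$ along partitions with mesh tending to zero. These are adapted. By left continuity, $F_n(x)\to F(x)$ for every $x\in(a,b]$, and $|F_n|\le \sup_{(a,b]}|F| =: M\in\LL^2(\Omega)$. The remaining task is to pass to the limit in each of the three terms.

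\emph{Small jumps.} By the Itô isometry,
$$
\EE\Bigl|\int\int y(F_n-F)\,d\tilde N\Bigr|^2 = C(\alpha,\gamma)\,\EE\int_a^b|F_n-F|^2\,dx,
$$
which tends to $0$ by dominated convergence with dominating function $4M^2(b-a)$.

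\emph{Big jumps.} The sum $\sum \Delta L(x)F_n(x)\mathbbm{1}_{|\Delta L(x)|\ge\gamma}$ has almost surely finitely many terms, and each term converges pointwise. Hence the sum converges almost surely.

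\emph{Left-hand side.} By the boundedness of the integral operator $\MM^\alpha((a,b])\to\Lambda^\alpha(\Omega)$,
$$
\Bigl\|\int (F_n-F)\,dL\Bigr\|_{\Lambda^\alpha}^\alpha \le C\,\EE\int_a^b|F_n-F|^\alpha\,dx,
$$
which tends to $0$ by dominated convergence, since $M^\alpha\le 1+M^2$ is integrable. Convergence in $\Lambda^\alpha$ implies convergence in probability.

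All three sequences therefore converge in probability, and the identity holds for every $n$. Passing to a subsequence that converges almost surely yields equality of the limits almost surely.

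The main obstacle I expect is making sure the $\Lambda^\alpha$-defined integral of $F$ (built via density in $\MM^\alpha$) agrees with the limit of the left-point Riemann approximations. The argument above handles this through operator continuity. One also needs $F\in\MM^\alpha((a,b])$, which follows from $F\in\MM^2$ on a bounded interval via Hölder/Jensen for $\alpha<2$. A secondary point is the joint measurability of $F_n$, which is immediate since they are simple.
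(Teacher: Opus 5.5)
Your proposal is correct and is essentially the paper's proof. Both verify the identity for simple integrands via the Lévy--Itô decomposition, then approximate by left-point step processes $F_n\to F$ and pass to the limit using the $\Lambda^\alpha$ operator bound for the $dL$-integral, the Itô isometry with dominated convergence for the small jumps, and pathwise convergence of the finite big-jump sum. You are slightly more explicit than the paper about the subsequence argument that reconciles the three modes of convergence and about the inclusion $\MM^2((a,b])\subseteq\MM^\alpha((a,b])$. One cosmetic point: $F(x_0^n)=F(a)$ is undefined, which you can fix by setting $F_n=0$ on the first cell.
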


This result is of course expected to hold, but still non-trivial because the left-hand side is defined via the $\Lambda^\alpha(\Omega)$ norm inequality, and not via the standard stochastic integration theory against semimartingales.

\begin{proof}[Proof of Lemma \ref{lemma: integral representation as integral against small and big jumps}]
First let $F(x) = \sum_{k=1}^n \xi_k\mathbbm{1}_{(x_{k-1}, x_k]}(x)$ be simple. Then, by the Lévy-Itô decomposition,
\begin{align*}
     &\int_a^b \int_{(-\gamma,\gamma)} y F(x) \, d\tilde N(x, y) + \int_a^b \int_{\RR\setminus(-\gamma,\gamma)} y F(x) \, dN(x, y) \\
     =& \int_a^b \int_{(-\gamma,\gamma)} y \sum_{k=1}^n \xi_k \mathbbm{1}_{(x_{k-1}, x_k]}(x) \, d\tilde N(x, y) + \int_a^b \int_{\RR\setminus(-\gamma,\gamma)} y \sum_{k=1}^n \xi_k \mathbbm{1}_{(x_{k-1}, x_k]}(x) \, dN(x,y) \\
     =& \sum_{k=1}^n\xi_k \left(\int_{x_{k-1}}^{x_k} \int_{(-\gamma,\gamma)} y \, d\tilde N(x,y) + \int_{x_{k-1}}^{x_k} \int_{\RR\setminus(-\gamma,\gamma)} y \, dN(x,y)\right) \\
     =& \sum_{k=1}^n \xi_k (L(x_k) - L(x_{k-1})) \\
     =& \int_a^b F(x) \, dL(x).
\end{align*}
For general $F$, consider the sequence of simple processes given by $F_n(x) = F(\lfloor xn\rfloor /n)$ such that $\PP\left(\forall x \in (a,b] : F_n(x) \overset{n\to\infty}{\to} F(x)\right) = 1$. By the dominated convergence theorem, we also have $F_n\to F$ in $\LL^2(\Omega \times (a,b])$, hence also in $\LL^\alpha(\Omega \times (a,b])$ which implies
\begin{align*}
    \int_a^b F(x)\, dL(x) &= \lim_{n\to\infty} \int_a^b F_n(x)\, dL(x) \\
    &= \lim_{n\to\infty} \int_a^b \int_{(-\gamma,\gamma)} y F_n(x) \, d\tilde N(x, y) + \lim_{n\to\infty}  \int_a^b \int_{\RR\setminus(-\gamma,\gamma)} y F_n(x) \, dN(x, y).
\end{align*}
The first limit is $\int_a^b\int_{(-\gamma,\gamma)} y F(x) d\tilde{N}(x,y)$ as an $\LL^2(\Omega)$ limit, and the second is a limit of finite sums so it converges almost surely to $\int_a^b\int_{\RR\setminus(-\gamma,\gamma)} y F(x) \, dN(x,y)$.
\end{proof}

\begin{proof}[Proof of Theorem \ref{theorem: equality holder exponents}]
First we show \eqref{eq: lower bound holder regularity}, i.e.\ \underline{regularity of the paths}. For an integer $n > 0$, we consider the Itô multifractional stable motion $(X^n(t))_{t \in [-n, n]} = (X(t))_{t \in [-n, n]}$ restricted to $[-n, n]$. Let $0 < \delta < \underline{H} - 1/\alpha$ so that $a(t) = H(t)-1/\alpha-\delta, t \in [-n,n]$ satisfies the conditions of Theorem \ref{theorem: refinement kolmogorov chentov theorem}. From Theorem \ref{theorem: refinement kolmogorov chentov theorem} and Proposition \ref{proposition:increment-moments} we see that $(X^n(t))_{t \in [-n,n]}$ admits a modification $(\tilde X^n_\delta(t))_{t \in [-n,n]}$ such that
$$
\PP\left(\forall [a,b] \subseteq [-n, n] : \rho^\text{unif}_{\tilde X^n_\delta}([a,b]) \geq \min_{t \in [a,b]} H(t) - \frac1\alpha - \delta\right) = 1
$$
Since $\tilde X^n_\delta$ and $\tilde X^n_{\delta'}$ are continuous modifications for different $\delta$ and $\delta'$, they are indistinguishable and form a uniform modification $\tilde X^n$ such that, for any $0 < \delta < \underline{H} - 1/\alpha$,
$$
\PP\left(\forall [a,b] \subseteq [-n, n] : \rho^\text{unif}_{\tilde X^n}([a,b]) \geq \min_{t \in [a,b]} H(t) - \frac1\alpha - \delta\right) = 1
$$
Countably taking $\delta \downarrow 0$ and using that there are countably many integers $n > 0$ reveals 
\begin{equation} \label{eq: probability for omega* 1}
\PP\left(\forall n > 0, \forall [a,b] \subseteq [-n, n] : \rho^\text{unif}_{\tilde X^n}([a,b]) \geq \min_{t \in [a,b]} H(t) - \frac1\alpha\right) = 1.
\end{equation}
Moreover if $m \geq n$ then $(\tilde X^m(t))_{t \in [-n, n]}$ and $(\tilde X^n(t))_{t \in [-n, n]}$ are continuous modifications and therefore indistinguishable. Thus,
\begin{equation} \label{eq: probability for omega* 2}
\PP\left(\forall m \geq n, \forall t \in [-n, n] : \tilde X^m(t) = \tilde X^n(t)\right) = 1.
\end{equation}
Letting $\Omega^*$ be the intersection of the events in the probability in Equations \eqref{eq: probability for omega* 1} and \eqref{eq: probability for omega* 2}, for each $\omega \in \Omega^*$ and $t \in \RR$, take $n \geq |t|$ and set $\tilde X(\omega, t) = \tilde X^n(\omega, t)$, this value is independent of the chosen $n$ due to \eqref{eq: probability for omega* 2}. If $\omega \not\in \Omega^*$ set $\tilde X(\omega, t) = 0$. Then $(X(t))_{t \in \RR}$ and $(\tilde X(t))_{t \in \RR}$ are modifications. Let $\omega \in \Omega^*$, for an interval $[a,b] \subseteq \RR$ take $n > 0$ such that $[a,b] \subseteq [-n, n]$. By construction $\tilde X(\omega, t) = \tilde X^n(\omega, t)$ for $t \in [-n, n]$. Moreover, $\rho^\text{unif}_{\tilde X^n(\omega, \cdot)}([a,b]) \geq \min_{t \in [a,b]} H(\omega, t) - 1/\alpha$ by \eqref{eq: probability for omega* 1}. It follows that $\rho^\text{unif}_{\tilde X(\omega, \cdot)}([a,b]) \geq \min_{t \in [a,b]} H(\omega, t) - 1/\alpha$. So
$$
\PP\left(\forall [a,b] \subseteq \RR : \rho^\text{unif}_{\tilde X}([a,b]) \geq \min_{t \in [a,b]} H(t) - 1/\alpha\right) = 1.
$$
In the sequel, we identify the Itô multifractional stable motion $(X(t))_{t \in \RR}$ with its continuous modification. 

Next, we prove the lower bound on $\rho^\text{unif}_X$, i.e.\ \underline{roughness of the paths}. Let $t_0 \in \RR$, we show that $\PP(\rho_X^\text{unif}(t_0) \leq H(t_0)-\frac{1}{\alpha}) = 1$. Note that $\overline{H}-\underline{H} < 1/2$ because $\underline{H} > 1/\alpha > 1/2$. Let $0 < \rho < 1/2 - \overline{H} + \underline{H}$ and let $\delta > 0$ be such that $w(\delta) < \rho$. The assumptions \eqref{eq: continuity assumption on H} and \eqref{eq: boundedness assumption on H} ensure that for each $t \in \RR$, the kernel $F_t(\omega, x) = (t-x)_+^{H(\omega, x)-1/\alpha} - (-x)_+^{H(\omega, x)-1/\alpha}$ is continuous and square integrable. Thus, for $\gamma > 0$ and $t \in (t_0-\delta, t_0+\delta]$, using Lemma \ref{lemma: integral representation as integral against small and big jumps}, decompose
\begin{align*}
    X(t) &= \underbrace{\int_{-\infty}^{t_0-\delta} F_t(x) \, dL(x)}_{A(t)} + \underbrace{\int_{t_0-\delta}^{t_0+\delta} \int_{(-\gamma,\gamma)} y F_t(x) \, d\tilde N(x,y)}_{B(t)} \\
    &\qquad + \underbrace{\int_{t_0-\delta}^{t_0+\delta} \int_{\RR\setminus(-\gamma,\gamma)} y F_t(x) \, dN(x,y)}_{C(t)},
\end{align*}
where $A, B$ and $C$ have continuous sample paths. Let $\tau = \inf\{t \in (t_0-\delta, t_0+\delta] : N((t_0-\delta, t] \times \RR \setminus(-\gamma,\gamma)) > 0\}$ be the first arrival time in $(t_0-\delta, t_0+\delta]$ of a jump of size at least $\gamma$, with $\tau = t_0+\delta$ if no such jump occurs. We show that there is a sequence $h_n \downarrow 0$ such that
\begin{align}
    \PP\left(h_n^{-(H(t_0)-1/\alpha+\rho)}|A(\tau+h_n)-A(\tau)| \to 0\right) &= 1, \label{eq: for upperbound local holder exponent, A stays bounded} \\
    \PP\left(h_n^{-(H(t_0)-1/\alpha+\rho)}|B(\tau+h_n)-B(\tau)| \to 0\right) &= 1, \label{eq: for upperbound local holder exponent, B stays bounded} \\
    \PP\left(h_n^{-(H(t_0)-1/\alpha+\rho)}|C(\tau+h_n)-C(\tau)| \to \infty\right) &\geq \PP(N((t_0-\delta,t_0+\delta) \times \RR\setminus(-\gamma,\gamma)) > 0). \label{eq: for upperbound local holder exponent, C blows up}
\end{align}
First we show \eqref{eq: for upperbound local holder exponent, A stays bounded} and \eqref{eq: for upperbound local holder exponent, B stays bounded}. Write $\Sigma = \sigma(N(A) : A \subseteq (t_0-\delta, t_0+\delta] \times \RR \setminus (-\gamma, \gamma) \text{ measurable})$ and define the filtration $(\widehat{\mathcal{F}}_x)_{x \in \RR}$ by $\widehat{\mathcal{F}}_x = \sigma(\mathcal{F}_x \cup \Sigma)$. Since the Poisson point process is independently scattered, both $\sigma(L(y) - L(x) : x < y \leq t_0-\delta)$ and $\sigma(N(A) : A \subseteq (t_0-\delta, t_0+\delta] \times (-\gamma, \gamma) \text{ measurable})$ are independent of $\Sigma$ and it follows that $(L(x))_{x \leq t_0-\delta}$ is an $\alpha$-stable Lévy process such that $L(y) - L(x) \perp \widehat{\mathcal{F}}_x$ for $x < y \leq t_0-\delta$, and that $\tilde N_{\restriction (t_0-\delta, t_0+\delta] \times (-\gamma, \gamma)}$ is a martingale valued measure with respect to $(\widehat{\mathcal{F}}_x)_{x \in (t_0-\delta, t_0+\delta]}$. Since $\tau$ is $\Sigma$-measurable, the kernel $F_\tau$ is adapted to $(\widehat{\mathcal{F}}_x)_{x \in \RR}$ and the stochastic integrals $\int_{-\infty}^{t_0-\delta} F_\tau(x) \, dL(x)$ and $\int_{t_0-\delta}^{t_0+\delta} \int_{(-\gamma,\gamma)} y F_\tau(x) \, d\tilde N(x,y)$ exist. 

Now we show that $A(\tau) = \int_{-\infty}^{t_0-\delta} F_\tau(x) \, dL(x)$ and $B(\tau) = \int_{t_0-\delta}^{t_0+\delta} \int_{(-\gamma,\gamma)} y F_\tau(x) \, d\tilde N(x,y)$. Let $\mathcal{P}^n$ be a sequence of partitions of $(t_0-\delta, t_0+\delta]$ with $\text{mesh}(\mathcal{P}^n) \to 0$. Then, writing $A^n(\tau) = \sum_{t_j \in \mathcal{P}^n} A(t_j)\mathbbm{1}_{(t_j, t_{j+1}]}(\tau)$ and $F^n_\tau(x) = \sum_{t_j \in \mathcal{P}^n}F_{t_j}(x)\mathbbm{1}_{(t_j, t_{j+1}]}(\tau)$, we have the representation $A^n(\tau) = \int_{-\infty}^{t_0-\delta} F^n_\tau(x) \, dL(x)$. Moreover, since $A$ has continuous sample paths, $A^n(\tau) \to A(\tau)$ almost surely. We show that $\EE\int_\RR |F_\tau^n(x)-F_\tau(x)|^\alpha \, dx \to 0$ so that $A^n(\tau)$ converges to $\int_{-\infty}^{t_0-\delta} F_\tau(x) \, dL(x)$ in $\Lambda^\alpha(\Omega)$. Indeed,
\begin{align*}
\EE\int_\RR |F_\tau^n(x)-F_\tau(x)|^\alpha \, dx &= \sum_{t_j \in \mathcal{P}^n} \EE \mathbbm{1}_{(t_j, t_{j+1}]}(\tau) \int_\RR |F_{t_j}(x)-F_\tau(x)|^\alpha \, dx\\
&\leq \sum_{t_j \in \mathcal{P}^n} \EE \sup_{t \in (t_j, t_{j+1}]}\int_\RR |F_{t_j}(x)-F_t(x)|^\alpha \, dx.
\end{align*}
Let $\epsilon > 0$ be such that $w(\epsilon) < \underline{H}-1/\alpha$, then, by Lemma \ref{lemma: bound on alpha norm difference kernel ito mbm}, for $n$ big enough so that $\text{mesh}(\mathcal{P}^n) < \epsilon$ and for $t \in (t_j, t_{j+1}]$,
\begin{align*}
\int_\RR|F_{t_j}(x) - F_t(x)|^\alpha \, dx &\leq C(\alpha, \underline{H}, \overline{H}, \epsilon) (t-t_j)^{\alpha\left(\underline{H}-w(\epsilon)\right)}\\
&\leq C(\alpha, \underline{H}, \overline{H}, \epsilon) (t_{j+1}-t_j) \text{mesh}(\mathcal{P}_n)^{\alpha\left(\underline{H}-\frac1\alpha-w(\epsilon)\right)}.
\end{align*}
We conclude that $\EE\int_\RR|F_\tau^n(x)-F_\tau(x)|^\alpha \, dx \leq C(\alpha, \underline{H}, \overline{H}, \epsilon, \delta) \text{mesh}(\mathcal{P}^n)^{\alpha(\underline{H}-1/\alpha-w(\epsilon))} \to 0$ and that $A(\tau) = \int_{-\infty}^{t_0-\delta} F_\tau(x) \, dL(x)$. 

Now, in the same setting as Lemma \ref{lemma: bound on alpha norm difference kernel ito mbm} and using the same arguments, we find 
\begin{align} \label{eq: bounds for the L2 norm of the kernel}
\begin{split}
\int_{-\infty}^{t-\epsilon} \left|(t+h-x)^{H(x)-\frac1\alpha} - (t-x)^{H(x)-\frac1\alpha}\right|^2 \, dx &\leq C(\alpha, \underline{H}, \overline{H}, \epsilon) h^2, \\
\int_{t-\epsilon}^t \left|(t+h-x)^{H(x)-\frac1\alpha} - (t-x)^{H(x)-\frac1\alpha}\right|^2 \, dx &\leq C(\alpha, \underline{H}, \overline{H}) \, h^{2\left(H(t)-w(\epsilon)+\frac12-\frac1\alpha\right)}, \\
\int_t^{t+h} (t+h-x)^{2\left(H(x) - \frac1\alpha\right)} \, dx &\leq C(\alpha, \underline{H}, \overline{H}) \, h^{2\left(H(t)-w(h)+\frac12-\frac1\alpha\right)}.
\end{split}
\end{align}
This leads to the conclusion that \[\EE\int_\RR|F_\tau^n(x) - F_\tau(x)|^2 \, dx \leq C(\alpha, \underline{H}, \overline{H}, \epsilon, \delta) \text{mesh}(\mathcal{P}^n)^{2(\underline{H}-1/\alpha-w(\epsilon))} \to 0.\] 
Hence, by the Itô isometry and continuity of $B$, $B(\tau) = \int_{t_0-\delta}^{t_0+\delta} \int_{(-\gamma,\gamma)} y F_\tau(x) \, d\tilde N(x,y)$. 
We have thus shown that plugging the argument $\tau$ in $A(\tau)$ and $B(\tau)$ is identical to the respective stochastic integrals with kernels $F_\tau$, so they are subject to the $\Lambda^\alpha(\Omega)$ norm inequality and the Itô isometry respectively. By Lemma \ref{lemma: bound on alpha norm difference kernel ito mbm}, choosing $\epsilon > 0$ such that $w(\epsilon) < 1/\alpha + \underline{H}-\overline{H}-\rho$, for $|h| < \epsilon$,
\begin{align*}
    \left\|\frac{A(\tau+h)-A(\tau)}{h^{H(t_0)-\frac1\alpha+\rho}}\right\|_{\Lambda^\alpha(\Omega)}^\alpha &\leq C(\alpha) h^{\alpha\left(-\overline{H}+\frac1\alpha-\rho\right)} \EE\int_{-\infty}^{t_0-\delta}|F_{\tau+h}(x)-F_\tau(x)|^\alpha \, dx \\
    &\leq C(\alpha, \underline{H}, \overline{H}, \epsilon) h^{\alpha\left(\frac1\alpha+\underline{H}-\overline{H}-\rho-w(\epsilon)\right)} \overset{h\to0}{\to} 0.
\end{align*}
Similarly, using \eqref{eq: bounds for the L2 norm of the kernel} and choosing $\epsilon > 0$ such that $w(\epsilon) < 1/2-\overline{H}+\underline{H}-\rho$, for $|h| < \epsilon$,
\begin{align*}
\left\|\frac{B(\tau+h)-B(\tau)}{h^{H(t_0)-\frac1\alpha+\rho}}\right\|_{\LL^2(\Omega)}^2 &\leq C(\alpha, \gamma) h^{2\left(-\overline{H}+\frac1\alpha-\rho\right)} \EE\int_\RR |F_{\tau+h}(x) - F_\tau(x)|^2 \, dx\\
&\leq C(\alpha, \underline{H}, \overline{H}, \epsilon, \gamma) h^{2\left(\frac12-\overline{H}+\underline{H}-w(\epsilon)-\rho\right)} \overset{h\to0}{\to} 0.
\end{align*}
We conclude that $h^{-(H(t_0)-1/\alpha+\rho)}(A(\tau+h)-A(\tau)) \to 0$ and $h^{-(H(t_0)-1/\alpha+\rho)}(B(\tau+h)-B(\tau)) \to 0$ in probability, \eqref{eq: for upperbound local holder exponent, A stays bounded} and \eqref{eq: for upperbound local holder exponent, B stays bounded} follow.

We now show \eqref{eq: for upperbound local holder exponent, C blows up}.
Suppose that $N((t_0-\delta, t_0+\delta) \times \RR\setminus(-\gamma,\gamma)) > 0$. Then $\tau \in (t_0-\delta, t_0+\delta)$ and $|\Delta L(\tau)| \geq \gamma$. Suppose furthermore that for $\epsilon > 0$ we have $N((\tau, \tau+\epsilon) \times \RR\setminus(-\gamma,\gamma)) = 0$ so that $\tau$ is the only jump of size at least $\gamma$ in $(t_0-\delta, \tau+\epsilon)$, then for $0 < h < \epsilon$,
$$
\frac{|C(\tau+h)-C(\tau)|}{h^{H(t_0)-\frac1\alpha+\rho}} = h^{H(\tau)-H(t_0)-\rho} |\Delta L(\tau)| \geq \gamma h^{w(\delta)-\rho} \to \infty.
$$
We conclude that
\begin{align*}
&\PP\left(\frac{|C(\tau+h)-C(\tau)|}{h^{H(t_0)-\frac1\alpha+\rho}} \overset{h\downarrow0}\to \infty\right)\\
\geq& \, \PP(N((t_0-\delta, t_0+\delta) \times \RR\setminus(-\gamma,\gamma)) > 0 \wedge N((\tau, \tau+\epsilon) \times \RR\setminus(-\gamma,\gamma)) = 0)\\
\overset{\epsilon\downarrow0}{\to}& \, \PP(N((t_0-\delta, t_0+\delta) \times \RR\setminus(-\gamma,\gamma)) > 0).
\end{align*}
Combining \eqref{eq: for upperbound local holder exponent, A stays bounded}, \eqref{eq: for upperbound local holder exponent, B stays bounded} and \eqref{eq: for upperbound local holder exponent, C blows up} reveals that for each $0 < \rho < 1/2 - \overline{H}+\underline{H}$ there is a $\delta > 0$ such that, for all $\gamma > 0$,
$$
\PP\left(\sup_{\substack{s, t \in [t_0-\delta,t_0+\delta] \\ s \neq t}} \frac{|X(t)-X(s)|}{|t-s|^{H(t_0)-1/\alpha+\rho}} = \infty\right) \geq \PP(N((t_0-\delta, t_0+\delta) \times \RR \setminus (-\gamma,\gamma)) > 0) \overset{\gamma\downarrow0}{\to} 1.
$$
Countably taking $\rho \downarrow 0$ and corresponding $\delta \downarrow 0$ shows $\PP(\rho^\text{unif}_X(t_0) \leq H(t_0)-1/\alpha) = 1$.

By \eqref{eq: lower bound holder regularity} and continuity of $H$ it follows that $\PP(\rho^\text{unif}_X(t_0) = H(t_0)-1/\alpha) = 1$. Thus, $\PP(\forall t_0 \in \mathbb{Q} : \rho^\text{unif}_X(t_0) = H(t_0) - 1/\alpha) = 1$. In this event, we have, for any $[a,b] \subseteq \RR$,
$$
\rho^\text{unif}_X([a,b]) \leq \inf_{t \in (a,b) \cap \mathbb{Q}} \rho^\text{unif}_X(t) = \inf_{t \in (a,b) \cap \mathbb{Q}} H(t)-\frac1\alpha = \min_{t \in [a,b]} H(t) - \frac1\alpha.
$$
We conclude that $\PP\left(\forall [a,b] \subseteq \RR : \rho^\text{unif}_X([a,b]) = \min_{t \in [a,b]} H(t)-\frac1\alpha\right) = 1$.

\end{proof}

For the localizability result Proposition \ref{proposition: tangent process ito multifractional stable motion}, we need the following technical lemma.

\begin{lemma} \label{lemma: technical lemma to change exponent}
Let $\alpha \in (0, 2)$ and $0 < \underline{c} < \overline{c} < 1$. Then there is a constant $C(\alpha, \underline{c}, \overline{c})$ such that for all functions $a: \RR \to [\underline{c}, \overline{c}]$ and $b: \RR \to [\underline{c}, \overline{c}]$ satisfying $|a(x) - b(x)| \leq \Delta$ for all $x \in \RR$, and for all $h \in (0,1/e)$,
\begin{align*}
&\int_\RR \left|\left((h-x)_+^{a(x)-\frac1\alpha} - (-x)_+^{a(x)-\frac1\alpha}\right) - \left((h-x)_+^{b(x)-\frac1\alpha} - (-x)_+^{b(x)-\frac1\alpha}\right)\right|^\alpha \, dx \\
\leq& \, C(\alpha, \underline{c}, \overline{c}) \, \Delta^\alpha h^{\alpha \, \underline{a \wedge b}} \, |\log h|^\alpha,
\end{align*}
where $\underline{a \wedge b} = \inf_{x \in \RR} (a(x) \wedge b(x))$.
\end{lemma}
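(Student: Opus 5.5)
We bound the integrand pointwise with the mean value theorem in the exponent, and then integrate over three regions. For fixed $x$, the map $c\mapsto g_x(c)=(h-x)_+^{c-\frac1\alpha}-(-x)_+^{c-\frac1\alpha}$ is smooth, with
$$
\partial_c g_x(c)=(h-x)_+^{c-\frac1\alpha}\log(h-x)_+-(-x)_+^{c-\frac1\alpha}\log(-x)_+ ,
$$
where $0\log 0=0$ by convention. The integrand is therefore bounded by $\Delta^\alpha\sup_{c\in[a(x)\wedge b(x),\,a(x)\vee b(x)]}|\partial_c g_x(c)|^\alpha$. It remains to show that $\int_\RR\sup_{c}|\partial_c g_x(c)|^\alpha\,dx\le C h^{\alpha\,\underline{a\wedge b}}|\log h|^\alpha$. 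All exponents $c$ lie in $[\underline{c},\overline{c}]$, and those of relevance lie between $\underline{a\wedge b}$ and $\overline c$.

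We split $\RR$ into three regions.

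\emph{Region $x\in(0,h]$.} Only the first term survives, so $|\partial_c g_x(c)|\le (h-x)^{c-\frac1\alpha}|\log(h-x)|$. The factor $(h-x)^{c-1/\alpha}$ is decreasing in $c$ when $h-x<1$, so the supremum over $c$ is controlled by the smallest admissible exponent $c_0:=a(x)\wedge b(x)\ge\underline{a\wedge b}$. Substituting $u=h-x$ gives
$$
\int_0^h u^{\alpha c_0-1}|\log u|^\alpha\,du .
$$
Since $u\le h<1/e$, we may bound $|\log u|^\alpha\le C(|\log h|^\alpha+|\log(u/h)|^\alpha)$. Scaling $u=hv$ then yields $C h^{\alpha\underline{a\wedge b}}|\log h|^\alpha$; the $\log(u/h)$ part contributes only $h^{\alpha\underline{a\wedge b}}$, which is smaller because $|\log h|\ge 1$.

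\emph{Region $-x\in(0,h]$.} Here both terms have bases of size at most $2h$. Bounding each separately in the same way gives the same order.

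\emph{Region $-x>h$.} Set $y=-x$. Apply the mean value theorem once more, now in the spatial variable on $[y,y+h]$, to the function $s\mapsto s^{c-\frac1\alpha}\log s$. Its derivative is $s^{c-\frac1\alpha-1}\bigl((c-\tfrac1\alpha)\log s+1\bigr)$, so
$$
|\partial_c g_x(c)|\le C h\, y^{c-\frac1\alpha-1}(1+|\log y|)
$$
for some point of $[y,y+h]$; since $y>h$, that point is comparable to $y$. We then split at $y=1$. For $h<y\le1$, use the exponent $c_0$; for $y>1$, use $\overline c$, which gives an integrable tail because $\alpha(\overline c-1)-1<-1$. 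In the range $h<y\le 1$, the integral $h^\alpha\int_h^1 y^{\alpha(c_0-1)-1}(1+|\log y|)^\alpha\,dy$ is dominated by its lower endpoint, since $c_0<1$. It is therefore bounded by $C h^\alpha h^{\alpha(c_0-1)}|\log h|^\alpha=Ch^{\alpha c_0}|\log h|^\alpha$. The tail over $y>1$ contributes $Ch^\alpha\le Ch^{\alpha\underline{a\wedge b}}|\log h|^\alpha$.

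The main obstacle is that the exponent $c$ varies with $x$ through $a(x),b(x)$, so it cannot be fixed uniformly. We handle this by taking suprema pointwise in $x$ and monotonicity in $c$ on each side of the threshold $1$, which is exactly what makes the constant depend only on $\alpha,\underline c,\overline c$. A second subtlety is that the intermediate point in the spatial mean value step lies between $y$ and $y+h\le 2y$. We also need care where $\log s$ changes sign near $s=1$; there the crude bound $1+|\log s|$ suffices.
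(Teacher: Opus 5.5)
Your proof is correct. It rests on the same device as the paper, a mean value theorem in the Hurst exponent, and on $x\in(0,h]$ your computation is essentially the paper's.

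The two routes differ on the half-line $x<0$. The paper first substitutes $x=h\hat x$, which turns the integrand into $h^{a}K_{a}(\hat x)-h^{b}K_{b}(\hat x)$ with the scale-free kernel $K_c(\hat x)=(1-\hat x)^{c-\frac1\alpha}-(-\hat x)^{c-\frac1\alpha}$. Differentiating in $c$, the factor $\log h$ comes directly from $\partial_c h^c$, and the prefactor $h^{\alpha\xi}$ is bounded by $h^{\alpha\underline{a\wedge b}}$. What remains are $h$-free integrals of $|K_\xi|^\alpha$ and $|\partial_c K_\xi|^\alpha$, whose boundedness uniformly in $\xi\in[\underline c,\overline c]$ the paper simply asserts.

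You stay in the original variable instead. The logarithm then has to be extracted from $|\log y|\le|\log h|$ on $[h,1]$. The interaction of the scales $h$ and $1$ is what forces your three-way split, the second (spatial) mean value theorem for $y>h$, and the choice of extremal exponent on either side of $y=1$.

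The paper's version is shorter and makes the origin of $|\log h|$ transparent. Yours is longer but self-contained: it effectively proves the uniform integrability the paper leaves implicit (a near-origin monotonicity bound plus a far-field mean value estimate). It also shows that the tail $y>1$ contributes only $O(h^\alpha)$.

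One point of hygiene: where you write $c_0=a(x)\wedge b(x)$ inside the $u$- and $y$-integrals, replace it by $\underline{a\wedge b}$ before integrating. This is legitimate because the bases are below $1$, which on $[-h,0)$ uses $2h<1$. It makes the power of $h$ and the constants uniform.
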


\begin{proof}
Split up the integral into
\begin{align}
&\int_{-\infty}^0 \left|\left((h-x)^{a(x)-\frac1\alpha} - (-x)^{a(x)-\frac1\alpha}\right) - \left((h-x)^{b(x)-\frac1\alpha} - (-x)^{b(x)-\frac1\alpha}\right)\right|^\alpha \, dx,  \label{eq: first part for the technical lemma}\\
+& \int_0^h \left|(h-x)^{a(x)-\frac1\alpha} - (h-x)^{b(x)-\frac1\alpha}\right|^\alpha \, dx. \label{eq: second part for the technical lemma}
\end{align}
To bound \eqref{eq: first part for the technical lemma}, substitute $x = h \hat x$ and apply the mean value theorem to obtain $\xi_{\hat x, h}$ between $a(h\hat x)$ and $b(h\hat x)$ such that
\begin{align*}
&\int_{-\infty}^0 \left|\left((h-x)^{a(x)-\frac1\alpha} - (-x)^{a(x)-\frac1\alpha}\right) - \left((h-x)^{b(x)-\frac1\alpha} - (-x)^{b(x)-\frac1\alpha}\right)\right|^\alpha \, dx\\
=&\int_{-\infty}^0 \left|h^{a(h\hat x)}\left((1-\hat x)^{a(h \hat x) - \frac1\alpha} - (-\hat x)^{a(h \hat x) - \frac1\alpha}\right) - h^{b(h\hat x)}\left((1-\hat x)^{b(h \hat x) - \frac1\alpha} - (-\hat x)^{b(h \hat x) - \frac1\alpha}\right)\right|^\alpha \, d\hat x \\
=& \int_{-\infty}^0 |b(h\hat x) - a(h\hat x)|^\alpha h^{\alpha \xi_{\hat x, h}}\left|\log h\left((1-\hat x)^{\xi_{\hat x, h}-\frac1\alpha} - (-\hat x)^{\xi_{\hat x, h}-\frac1\alpha} \right)\right. \\
&\left.+ \left((1-\hat x)^{\xi_{\hat x, h}-\frac1\alpha} \log(1-\hat x) - (-\hat x)^{\xi_{\hat x, h} - \frac1\alpha}\log(-\hat x)\right)\right|^\alpha \, d\hat x \\
\leq& \, 2^\alpha \Delta^\alpha h^{\alpha \, \underline{a \wedge b}}\, |\log h|^\alpha \left[\int_{-\infty}^0 \left|(1-\hat x)^{\xi_{\hat x, h}-\frac1\alpha} - (-\hat x)^{\xi_{\hat x, h}-\frac1\alpha}\right|^\alpha \, d\hat x \right. \\
&\left. \hspace*{2.9cm} + \int_{-\infty}^0\left|(1-\hat x)^{\xi_{\hat x, h}-\frac1\alpha} \log(1-\hat x) - (-\hat x)^{\xi_{\hat x, h} - \frac1\alpha}\log(-\hat x)\right|^\alpha \, d\hat x\right].
\end{align*}
Both of the integrals between the square brackets are bounded above, independently from $h$ and the functions $a$ and $b$. To bound \eqref{eq: second part for the technical lemma}, the mean value theorem implies that
\begin{align*}
\int_0^h \left|(h-x)^{a(x)-\frac1\alpha} - (h-x)^{b(x)-\frac1\alpha}\right|^\alpha \, dx &\leq \Delta^\alpha \int_0^h (h-x)^{\alpha \underline{a \wedge b} - 1} |\log(h-x)|^\alpha \, dx \\
&\hspace*{-0.4cm}\overset{x = h-h\hat x}{\leq} \Delta^\alpha h^{\alpha \underline{a \wedge b}} \int_0^1 \hat x^{\alpha \underline{c}-1} |\log \hat x + \log h|^\alpha \, d \hat x \\
&\leq 2^\alpha \Delta^\alpha h^{\alpha \underline{a \wedge b}} |\log h|^\alpha \int_0^1 \hat x^{\alpha \underline{c}-1} (|\log \hat x |^\alpha + 1) \, d\hat x.
\end{align*}
\end{proof}

\begin{proof}[Proof of Proposition \ref{proposition: tangent process ito multifractional stable motion}]
To show finite-dimensional convergence, let $r \in [a,b]$ be fixed. First it will be established that, for $q \in (0,1)$ small enough,
\begin{align} \label{eq: replace H(x) by H(t) for localizability}
\begin{split}
& \frac{X(t_0+hr)-X(t_0)}{h^{H(t_0)}} \\
=& \, h^{-H(t_0-h^q)} \int_{t_0-h^q}^\infty (t_0+hr-x)_+^{H(t_0-h^q) - \frac1\alpha} - (t_0-x)_+^{H(t_0-h^q) - \frac1\alpha} \, dL(x) + o_\PP(1).
\end{split}
\end{align}
The first step to establishing \eqref{eq: replace H(x) by H(t) for localizability} is to show that
\begin{equation} \label{eq: localizability random multifractional parameter, step 1}
h^{-H(t_0)} \int_{-\infty}^{t_0-h^q} (t_0+hr-x)^{H(x) - \frac1\alpha} - (t_0-x)^{H(x)-\frac1\alpha} \, dL(x) \to 0 \hspace*{0.5cm} \text{in probability as $h \downarrow 0$.}
\end{equation}
Assume that $r > 0$. Then, using $\epsilon = h^q$ in \eqref{eq: lemma bounding difference kernel norms, part 1} from Lemma \ref{lemma: bound on alpha norm difference kernel ito mbm} shows that
\begin{align*}
&\left\|h^{-H(t_0)} \int_{-\infty}^{t_0-h^q} (t_0+hr-x)^{H(x) - \frac1\alpha} - (t_0-x)^{H(x)-\frac1\alpha} \, dL(x)\right\|_{\Lambda^\alpha(\Omega)}^\alpha\\
\leq& \, C(\alpha) h^{-\alpha\overline{H}} \EE\int_{-\infty}^{t_0-h^q}\left|(t_0+hr-x)^{H(x)-\frac1\alpha} - (t_0-x)^{H(x)-\frac1\alpha}\right|^\alpha \, dx \\
\leq& \, C(\alpha, \underline{H}, \overline{H})h^{-\alpha \overline{H}} h^{\alpha q(\underline{H}-1)} (hr) \\ 
\leq& \, C(\alpha, \underline{H}, \overline{H}, r) h^{\alpha \left[1-\overline{H} - q(1-\underline{H})\right]} \overset{h\downarrow0}{\to} 0 
\end{align*}
whenever $q < \frac{1-\overline{H}}{1-\underline{H}}$. If $r < 0$ then we take $\epsilon = hr + h^q$ (under the assumption that $h$ is close enough to zero so that this is positive) to conclude that
\begin{align*}
&\left\|h^{-H(t_0)} \int_{-\infty}^{t_0-h^q} (t_0+hr-x)^{H(x) - \frac1\alpha} - (t_0-x)^{H(x)-\frac1\alpha} \, dL(x)\right\|_{\Lambda^\alpha(\Omega)}^\alpha\\
\leq& \, C(\alpha) h^{-\alpha \overline{H}} \EE\int_{-\infty}^{t_0+hr - \epsilon}\left|(t_0-x)^{H(x)-\frac1\alpha} - (t_0+hr-x)^{H(x)-\frac1\alpha}\right|^\alpha \, dx \\
\leq& \, C(\alpha, \underline{H}, \overline{H}) h^{-\alpha \overline{H}} (hr+h^q)^{\alpha(\underline{H}-1)}(h|r|)^\alpha \\
\leq& \, C(\alpha, \underline{H}, \overline{H}, r) h^{\alpha\left[1-\overline{H} - q(1-\underline{H})\right]} (1+rh^{1-q})^{\alpha(\underline{H}-1)} \overset{h\downarrow0}{\to} 0
\end{align*}
under the assumption that $q < \frac{1-\overline{H}}{1-\underline{H}}$.
Since convergence in a weak Lebesgue space implies convergence in probability, \eqref{eq: localizability random multifractional parameter, step 1} follows. Thus
$$
\frac{X(t_0+hr) - X(t_0)}{h^{H(t_0)}} = h^{-H(t_0)} \int_{t_0-h^q}^\infty (t_0+hr-x)_+^{H(x)-\frac1\alpha} - (t_0-x)_+^{H(x)-\frac1\alpha} \, dL(x) + o_\PP(1).
$$
The second step is replacing $H(t_0)$ in the prefactor $h^{-H(t_0)}$ by $H(t_0-h^q)$, making it $\mathcal{F}_{t_0-h^q}$-measurable. By the mean value theorem, with probability one,
$$
\left|h^{-H(t_0)} - h^{-H(t_0-h^q)}\right| \leq h^{-H(t_0-h^q)} w(h^q) |\log h| h^{-w(h^q)}.
$$
Thus, using that $h^{-H(t_0-h^q)}$ is $\mathcal{F}_{t_0-h^q}$-measurable and using \eqref{eq: lemma bounding difference kernel norms, part 2} and \eqref{eq: lemma bounding difference kernel norms, part 3} in Lemma \ref{lemma: bound on alpha norm difference kernel ito mbm}, we have 
\begin{align*}
&\left\|\left(h^{-H(t_0)} - h^{-H(t_0-h^q)}\right) \int_{t_0-h^q}^\infty  (t_0+hr-x)_+^{H(x)-\frac1\alpha} - (t_0-x)_+^{H(x)-\frac1\alpha} \, dL(x)\right\|_{\Lambda^\alpha(\Omega)}^\alpha \\
\leq & \, w(h^q)^\alpha |\log h|^\alpha h^{-\alpha w(h^q)} \left\|h^{-H(t_0-h^q)} \int_{t_0-h^q}^\infty  (t_0+hr-x)_+^{H(x)-\frac1\alpha} - (t_0-x)_+^{H(x)-\frac1\alpha} \, dL(x)\right\|_{\Lambda^\alpha(\Omega)}^\alpha \\
\leq& \, C(\alpha) w(h^q)^\alpha |\log h|^\alpha h^{-\alpha w(h^q)}\EE h^{-\alpha H(t_0-h^q)} \int_{t_0-h^q}^\infty \left|(t_0+hr-x)_+^{H(x)-\frac1\alpha} - (t_0-x)_+^{H(x)-\frac1\alpha}\right|^\alpha \, dx \\
\leq& \, C(\alpha, \underline{H}, \overline{H}) w(h^q)^\alpha |\log h|^\alpha h^{-\alpha w(h^q)} \EE h^{-\alpha H(t_0-h^q)} \\
& \cdot \left((h|r|)^{\alpha (H(t_0 \wedge (t_0+hr)) - w(h^q))} + (h|r|)^{\alpha(H(t_0 \wedge (t_0+hr))-w(h|r|))}\right) \\
\leq& \, C(\alpha, \underline{H}, \overline{H}, r) w(h^q)^\alpha |\log h|^\alpha h^{-2\alpha w(h^q) - \alpha w(h^q + h|r|)} \overset{h\downarrow0}{\to} 0.
\end{align*}
We conclude that
$$
\frac{X(t_0+hr) - X(t_0)}{h^{H(t_0)}} = h^{-H(t_0-h^q)} \int_{t_0-h^q}^\infty  (t_0+hr-x)_+^{H(x)-\frac1\alpha} - (t_0-x)_+^{H(x)-\frac1\alpha} \, dL(x) + o_\PP(1).
$$
The third and final step is approximating $H(x)$ in the integrand by $H(t_0-h^q)$. This will be done using Lemma \ref{lemma: technical lemma to change exponent}, which shows that, with probability one,
\begin{align*}
    &h^{-\alpha H(t_0-h^q)}\int_{t_0-h^q}^{(t_0+hr) \vee t_0} \left|\left((t_0+hr-x)_+^{H(x)-\frac1\alpha} - (t_0-x)_+^{H(x)-\frac1\alpha}\right) - \right.\\
    &\left.\hspace*{4.03cm}\left((t_0+hr-x)_+^{H(t_0-h^q)-\frac1\alpha} - (t_0-x)_+^{H(t_0-h^q)-\frac1\alpha}\right)\right|^\alpha \, dx \\
    =& \, h^{-\alpha H(t_0-h^q)} \int_{-h^q}^{hr \vee 0} \left|\left((hr-x)_+^{H(x+t_0)-\frac1\alpha} - (-x)_+^{H(x+t_0)-\frac1\alpha}\right) - \right.\\
    &\left.\hspace*{3.25cm}\left((hr-x)_+^{H(t_0-h^q)-\frac1\alpha} - (-x)_+^{H(t_0-h^q)-\frac1\alpha}\right)\right|^\alpha \, dx \\
    \leq& \, C(\alpha, \underline{H}, \overline{H}) h^{-\alpha H(t_0-h^q)} \left(\sup_{x \in [t_0-h^q, (t_0+hr) \vee t_0]} |H(x) - H(t_0-h^q)|\right)^\alpha \\
    & \cdot (h|r|)^{\alpha(H(t_0-h^q) - w(h^q + h|r|))} |\log(h|r|)|^\alpha \\
    \leq& \, C(\alpha, \underline{H}, \overline{H}, r) w(h^q + h|r|)^\alpha |\log h|^\alpha h^{-\alpha w(h^q + h|r|)}.
\end{align*}
Taking expectation, and using that $h^{-H(t_0-h^q)}$ is $\mathcal{F}_{t_0-h^q}$-measurable so that we can take it inside the stochastic integral, we find that
\begin{align*}
&\left\|h^{-H(t_0-h^q)}\left(\int_{t_0-h^q}^\infty (t_0+hr-x)_+^{H(x)-\frac1\alpha} - (t_0-x)_+^{H(x)-\frac1\alpha} \, dL(x) -\right.\right.\\
&\left.\left.\hspace*{2cm} \int_{t_0-h^q}^\infty (t_0+hr-x)_+^{H(t_0-h^q)-\frac1\alpha} - (t_0-x)_+^{H(t_0-h^q)-\frac1\alpha} \, dL(x)\right)\right\|_{\Lambda^\alpha(\Omega)}^\alpha \\
\leq&\, C(\alpha, \underline{H}, \overline{H}, r) w(h^q + h|r|)^\alpha |\log h|^\alpha h^{-\alpha w(h^q + h|r|)} \overset{h\downarrow0}{\to} 0.
\end{align*}
It follows that
$$
\frac{X(t_0+hr) - X(t_0)}{h^{H(t_0)}} = h^{-H(t_0-h^q)} \int_{t_0-h^q}^\infty  (t_0+hr-x)_+^{H(t_0-h^q)-\frac1\alpha} - (t_0-x)_+^{H(t_0-h^q)-\frac1\alpha} \, dL(x) + o_\PP(1),
$$
so that \eqref{eq: replace H(x) by H(t) for localizability} holds. Now, the integrand on the right hand side is $\mathcal{F}_{t_0-h^q}$-measurable, so it is independent of $((L(x)-L(t_0-h^q))_{x \geq t_0-h^q}$ and we may introduce a symmetric $\alpha$-stable Lévy motion $(\tilde L(x))_{x \in \RR}$, independent of the processes $L$ and $H$, such that
\begin{align*}
& \left(\int_{t_0-h^q}^\infty  (t_0+hr-x)_+^{H(t_0-h^q)-\frac1\alpha} - (t_0-x)_+^{H(t_0-h^q)-\frac1\alpha} \, dL(x)\right)_{r \in [a,b]} \\ 
\overset{d}{=}& \left(\int_{t_0-h^q}^\infty  (t_0+hr-x)_+^{H(t_0-h^q)-\frac1\alpha} - (t_0-x)_+^{H(t_0-h^q)-\frac1\alpha} \, d\tilde L(x)\right)_{r \in [a,b]}.
\end{align*}
By applying the same steps as before but in reverse, and using that $(\tilde L(x))_{x \in \RR}$ is $\frac1\alpha$-self-similar and has stationary increments, we find, for multiple values $r_1, \ldots, r_n$,
\begin{align*}
& \left(\frac{X(t_0+hr_k)-X(t_0)}{h^{H(t_0)}}\right)_{k=1 \ldots n}\\
=& \left(h^{-H(t_0-h^q)} \int_{t_0-h^q}^\infty (t_0+hr_k-x)_+^{H(t_0-h^q)-\frac1\alpha} - (t_0-x)_+^{H(t_0-h^q)-\frac1\alpha} \, dL(x)\right)_{k=1 \ldots n} + o_\PP(1)\\
\overset{d}{=}& \left(h^{-H(t_0-h^q)} \int_{t_0-h^q}^\infty (t_0+hr_k-x)_+^{H(t_0-h^q)-\frac1\alpha} - (t_0-x)_+^{H(t_0-h^q)-\frac1\alpha} \, d\tilde L(x)\right)_{k=1 \ldots n} + o_\PP(1)\\
=& \left(h^{-H(t_0)} \int_\RR (t_0+hr_k-x)_+^{H(t_0)-\frac1\alpha} - (t_0-x)_+^{H(t_0)-\frac1\alpha} \, d\tilde L(x)\right)_{k=1 \ldots n} + o_\PP(1) \\
\overset{d}{=}& \left(\int_\RR (r_k - x)_+^{H(t_0) - \frac1\alpha} - (-x)_+^{H(t_0) - \frac1\alpha} \, d\tilde L(x)\right)_{k=1 \ldots n} +o_\PP(1).
\end{align*}
This shows convergence in finite-dimensional distribution. To show that the convergence is functional, let $\tilde X^{h, t_0}(r) = h^{-H(t_0)}(X(t_0+hr)-X(t_0))$ be the rescaled difference process. We will show that the collection of processes $(\tilde X^{h, t_0})_{h>0}$ is tight in $C([a,b])$ by applying Theorem 23.7 from \citet{kallenberg_foundations_of_modern_probability}. Of course $\tilde X^{h,t_0}(0) = 0$ for each $h > 0$, so $(\tilde X^{h, t_0}(0))_{h > 0}$ is tight. Let $p < \alpha$ and let $r, s \in [a,b]$ and suppose $0 \leq r-s \leq 1$, then
\begin{align*}
& \left\|\frac{X(t_0+hr)-X(t_0+hs)}{h^{H(t_0)}}\right\|^p_{\LL^p(\Omega)} \\
\leq& \, h^{-pw(h(b-a))} |r-s|^{p \underline{H}} \left\|\frac{X(t_0+hr)-X(t_0+hs)}{|hr-hs|^{H(t_0+hs)}}\right\|_{\LL^p(\Omega)}^p \\
\leq& \, C(\alpha, p) h^{-pw(h(b-a))} |r-s|^{p \underline{H}} \left(\left\|\frac{X(t_0+hr)-X(t_0+hs)}{|hr-hs|^{H(t_0+hs)}}\right\|_{\Lambda^\alpha(\Omega)}^\alpha \right)^{\frac p\alpha}.
\end{align*}
Applying \eqref{eq: bound weak alpha moment, 1}, \eqref{eq: bound weak alpha moment, 2} and \eqref{eq: bound weak alpha moment, 3} with $\epsilon = h^q$ for $q \in (0,1)$ reveals that
\begin{align*}
&\left\|\frac{X(t_0+hr)-X(t_0+hs)}{|hr-hs|^{H(t_0+hs)}}\right\|_{\Lambda^\alpha(\Omega)}^\alpha \\
\leq& \, C(\alpha, \underline{H}, \overline{H}) \left[h^{q\alpha(\underline{H}-1)}|hr-hs|^{\alpha (1-\overline{H})} + |hr-hs|^{-3\alpha w(h^q)} + |hr-hs|^{-\alpha w(h(b-a))}\right].
\end{align*}
Now $h^{\alpha(1-\overline{H}-q(1-\underline{H}))} \to 0$ if $q < \frac{1-\overline{H}}{1-\underline{H}}$. Moreover, $h^{-3\alpha w(h^q)} \to 1$ and $h^{-\alpha w(h(b-a))} \to 1$, so for all $h \in (0, \delta)$ in a small enough neighborhood of $0$, we find
$$
\left\|\frac{X(t_0+hr)-X(t_0+hs)}{|hr-hs|^{H(t_0+hs)}}\right\|_{\Lambda^\alpha(\Omega)}^\alpha \leq C(\alpha, \underline{H}, \overline{H}, \delta) |r-s|^{-3\alpha w(\delta^q)}.
$$
Thus, for all $h \in (0,\delta)$,
$$
\left\|\frac{X(t_0+hr)-X(t_0+hs)}{h^{H(t_0)}}\right\|^p_{\LL^p(\Omega)} \leq C(\alpha, \underline{H}, \overline{H}, p, \delta) |r-s|^{p\underline{H} - 3p w(\delta^q)}.
$$
Choosing $1/\underline{H} < p < \alpha$ and $\delta > 0$ small enough so that $3pw(\delta^q) < p\underline{H} - 1$, we find that the moment criteria of Theorem 23.7 in \citet{kallenberg_foundations_of_modern_probability} is met, so $(\tilde X^{h,t_0})_{h > 0}$ is tight. Functional convergence thus follows from Prokhorov's theorem.
\end{proof}

\begin{proof}[Proof of Theorem \ref{theorem: upper bound (non-uniform) poinwise holder exponent}]
By Proposition \ref{proposition: tangent process ito multifractional stable motion}, as $h \downarrow 0$ we have $h^{-H(t_0)}(X(t_0+h)-X(t_0)) \to \int_\RR (1-x)_+^{H(t_0)-1/\alpha} - (-x)_+^{H(t_0)-1/\alpha} \, d\tilde L(x)$ in distribution. So for any $\rho > 0$, as $h \downarrow 0$, $h^{-H(t_0)-\rho}(X(t_0+h)-X(t_0)) \to \infty$ in distribution and thus also in probability. So there is a sequence $h_n \downarrow 0$ such that the divergence is almost sure and we find $\rho_X(t_0) \leq H(t_0)$ almost surely.
\end{proof}

\bibliographystyle{apalike}
\bibliography{references}

@article{bibinger_modeling_2026,
	title = {Modeling and {Forecasting} {Realized} {Volatility} with {Multivariate} {Fractional} {Brownian} {Motion}},
	issn = {0735-0015, 1537-2707},
	url = {https://www.tandfonline.com/doi/full/10.1080/07350015.2026.2696490},
	doi = {10.1080/07350015.2026.2696490},
	language = {en},
	urldate = {2026-07-06},
	journal = {Journal of Business \& Economic Statistics},
	author = {Bibinger, Markus and Yu, Jun and Zhang, Chen},
	month = jun,
	year = {2026},
	pages = {1--27},
}

@article{shen_hurst_2020,
	title = {Hurst function estimation},
	volume = {48},
	issn = {0090-5364},
	url = {https://projecteuclid.org/journals/annals-of-statistics/volume-48/issue-2/Hurst-function-estimation/10.1214/19-AOS1825.full},
	doi = {10/gn8zkk},
	number = {2},
	urldate = {2022-01-25},
	journal = {The Annals of Statistics},
	author = {Shen, Jinqi and Hsing, Tailen},
	month = apr,
	year = {2020},
	pages = {838--862},
}

@misc{mies_rough_2025,
	title = {Rough {Hurst} function estimation},
	url = {http://arxiv.org/abs/2511.10103},
	doi = {10.48550/arXiv.2511.10103},
	urldate = {2026-07-06},
	publisher = {arXiv},
	author = {Mies, Fabian and Wilkens, Benedikt},
	month = nov,
	year = {2025},
	note = {arXiv:2511.10103 [math.ST]},
}

@article{ayache_uniformly_2017,
	title = {Uniformly and strongly consistent estimation for the {Hurst} function of a {Linear} {Multifractional} {Stable} {Motion}},
	volume = {23},
	issn = {1350-7265},
	url = {https://projecteuclid.org/journals/bernoulli/volume-23/issue-2/Uniformly-and-strongly-consistent-estimation-for-the-Hurst-function-of/10.3150/15-BEJ781.full},
	doi = {10.3150/15-BEJ781},
	number = {2},
	urldate = {2026-07-06},
	journal = {Bernoulli},
	author = {Ayache, Antoine and Hamonier, Julien},
	month = may,
	year = {2017}
	
}

@article{Mazur2018,
	title = {Estimation of the linear fractional stable motion},
	volume = {26},
	url = {https://projecteuclid.org/journals/bernoulli/volume-26/issue-1/Estimation-of-the-linear-fractional-stable-motion/10.3150/19-BEJ1124.short},
	doi = {10.3150/19-BEJ1124},
	number = {1},
	journal = {Bernoulli},
	author = {Mazur, Stepan and Otryakhin, Dmitry and Podolskij, Mark},
	year = {2020},
	pages = {226--252},
}

@article{Ljungdahl2020,
	title = {Multi-{Dimensional} {Parameter} {Estimation} of {Heavy}-{Tailed} {Moving} {Averages}},
	volume = {49},
	doi = {10.1111/sjos.12527},
	journal = {Scandinavian Journal of Statistics},
	author = {Ljungdahl, Mathias Mørck and Podolskij, Mark},
	year = {2021},
	pages = {593--624},
}

@article{Ljungdahl2019,
	title = {A {Minimal} {Contrast} {Estimator} for the {Linear} {Fractional} {Stable} {Motion}},
	volume = {23},
	url = {http://arxiv.org/abs/1911.04341},
	doi = {10.1007/s11203-020-09216-2},
	journal = {Statistical Inference for Stochastic Processes},
	author = {Ljungdahl, Mathias Mørck and Podolskij, Mark},
	year = {2020},
	pages = {381--413},
}

@article{mies_estimation_2023,
	title = {Estimation of mixed fractional stable processes using high-frequency data},
	volume = {51},
	url = {http://arxiv.org/abs/2208.07453},
	doi = {10.1214/23-AOS2312},
	number = {5},
	urldate = {2022-09-07},
	journal = {Annals of Statistics},
	author = {Mies, Fabian and Podolskij, Mark},
	year = {2023},
	pages = {1946--1964},
}

@article{ayache_bouly_moving_average_multifractional_processes_with_random_exponent_lower_bounds_for_local_oscillations,
TITLE = {Moving average Multifractional Processes with Random Exponent: Lower bounds for local oscillations},
VOLUME = {146},
DOI = {10.1016/j.spa.2022.01.003},
JOURNAL = {Stochastic Processes Appl.},
AUTHOR = {A. Ayache and F. Bouly},
YEAR = {2021},
PAGES = {143--163},
}

@inproceedings{bianchi_henrique_vieira_ling_a_novel_network_traffic_predictor_based_on_multifractional_traffic_characteristic,
TITLE = {A novel network traffic predictor based on multifractal traffic characteristic},
VOLUME = {2},
ISBN = {0-7803-8794-5},
DOI = {10.1109/GLOCOM.2004.1378048},
BOOKTITLE = {{IEEE} {Global} {Telecommunications} {Conference}, 2004. {GLOBECOM} '04.},
PUBLISHER = {IEEE},
AUTHOR = {G. Bianchi and F. Henrique and T. Vieira and L. L. Ling},
YEAR = {2004},
PAGES = {680--684},
}

@article{bibinger_cusum_tests_for_changes_in_the_hurst_exponent_and_volatility_of_fractional_brownian_motion,
TITLE = {Cusum Tests for Changes in the {{Hurst}} Exponent and Volatility of Fractional {{Brownian}} Motion},
AUTHOR = {M. Bibinger},
YEAR = {2020},
JOURNAL = {Stat. Probab. Lett.},
VOLUME = {161},
PAGES = {108725},
DOI = {10.1016/j.spl.2020.108725},
ISBN = {1904.04556v1},
}

@article{bianchi_pantanella_pianese_modeling_stock_prices_by_multifractional_brownian_motion,
TITLE = {Modeling stock prices by multifractional {Brownian} motion: an improved estimation of the pointwise regularity},
VOLUME = {13},
ISSN = {1469-7688, 1469-7696},
DOI = {10.1080/14697688.2011.594080},
NUMBER = {8},
JOURNAL = {Quantatative Finance},
AUTHOR = {S. Bianchi and A. Pantanella and A. Pianese},
YEAR = {2013},
PAGES = {1317--1330},
}

@article{frezza_bianchi_pianese_fractal_analysis_of_market_inefficiency_during_the_covid_19,
TITLE = {Fractal analysis of market (in)efficiency during the {COVID}-19},
VOLUME = {38},
ISSN = {15446123},
DOI = {10.1016/j.frl.2020.101851},
JOURNAL = {Finance Research Letters},
AUTHOR = {M. Frezza and S. Bianchi and A. Pianese},
YEAR = {2021},
PAGES = {101851},
}

@book{samorodnitsky_taqqu_stable_non_gaussian_random_processes,
AUTHOR = {G. Samorodnitsky and M. S. Taqqu},
TITLE = {Stable Non-Gaussian Random Processes},
SUBTITLE = {Stochastic Models with Infinite Variane},
YEAR = 1994,
PUBLISHER = {Chapman and Hall},
ISBN = {0-412-05171-0},
ADDRESS = {New York}
}

@book{nolan_univariate_stable_distributions,
AUTHOR = {J. P. Nolan},
TITLE = {Univariate Stable Distributions},
YEAR = 2020,
PUBLISHER = {Springer},
ADDRESS = {Cham},
ISBN = {978-3-030-52914-7},
DOI = {10.1007/978-3-030-52915-4}
}

@article{stoev_taqqu_stochastic_properties_of_the_linear_multifractional_stable_motion,
AUTHOR = {S. Stoev and M. S. Taqqu},
TITLE = {Stochastic Properties of the Linear Multifractional Stable Motion},
JOURNAL = {Adv. Appl. Probab.},
VOLUME = {36},
NUMBER = {4},
PAGES = {1085--1115},
YEAR = {2004},
DOI = {10.1239/aap/1103662959}
}

@article{ayache_hamonier_linear_multifractiona_stable_motion_fine_path_properties,
AUTHOR = {A. Ayache and J. Hamonier},
TITLE = {Linear multifractional stable motion: fine path properties},
JOURNAL = {Rev. Mat. Iberoam.},
VOLUME = {30},
NUMBER = {4},
PAGES = {1301-1354},
YEAR = {2014},
DOI = {10.4171/RMI/816}
}

@article{loboda_mies_steland_regularity_of_multifractional_moving_avarage_processes_with_random_hurst_exponent,
AUTHOR = {D. Loboda and F. Mies and A. Steland},
TITLE = {Regularity of multifractional moving average processes with random Hurst exponent},
JOURNAL = {Stochastic Processes Appl.},
VOLUME = {140},
PAGES = {21-48},
YEAR = {2021},
DOI = {10.1016/j.spa.2021.05.008}
}

@article{gine_marcus_the_central_limit_theorem_for_stochastic_integrals_with_respect_to_levy_processes,
AUTHOR = {E. Giné and M. B. Marcus},
TITLE = {The Central Limit Theorem for Stochastic Integrals with Respect to Levy Processes},
JOURNAL = {Ann. Probab.},
VOLUME = {11},
NUMBER = {1},
PAGES = {58 -- 77},
YEAR = {1983},
DOI = {10.1214/aop/1176993660},
}

@article{mandelbrot_van_ness_fractional_brownian_motions_fractional_noises_and_applications,
AUTHOR = {Mandelbrot, B. B. and van Ness, J. W.},
TITLE = {Fractional Brownian Motions, Fractional Noises and Applications},
JOURNAL = {SIAM Rev.},
VOLUME = {10},
NUMBER = {4},
PAGES = {422--437},
YEAR = {1968}
}

@techreport{peltier_levy_vehel_multifractional_brownian_motion_definition_and_preliminary_results,
AUTHOR = {Peltier, R. F. and Lévy Véhel, J.},
TITLE = {{Multifractional Brownian Motion: Definition and Preliminary Results}},
type = {Research Report},
NUMBER = {RR-2645},
institution = {{INRIA}},
YEAR = {1995}
}

@article{Ayache_taqqu_multifractional_processes_with_random_exponent,
AUTHOR = {A. Ayache and M. S. Taqqu},
TITLE = {Multifractional processes with random exponent.},
JOURNAL = {Publicacions Matemàtiques},
VOLUME = {49},
NUMBER = {2},
PAGES = {459-486},
YEAR = {2005},
DOI = {10.5565/PUBLMAT_49205_11}
}

@article{ayache_esser_hamonier_a_new_multifractional_process_with_random_exponent,
AUTHOR = {A. Ayache and C. Esser and J. Hamonier},
TITLE ={A new Multifractional Process with Random Exponent},
JOURNAL = {Risk and Decision Analysis},
VOLUME = {7},
NUMBER = {1-2},
PAGES = {5-29},
YEAR = {2018},
DOI = {10.3233/RDA-180135}
}

@article{taqqu_wolpert_infinite_variance_self_similar_processes_subordinate_to_a_poisson_measure,
AUTHOR = {M. S. Taqqu and R. L. Wolpert},
TITLE = {Infinite Variance Self-Similar Processes Subordinate to a Poisson Measure},
JOURNAL = {Zeitschrift für Wahrscheinlichkeitstheorie und Verwandte Gebiete},
VOLUME = {6},
PAGES = {53-72},
YEAR = {1983},
DOI = {10.1007/BF00532163}}

@book{billingsley_convergence_of_probability_measures,
AUTHOR = {P. Billingsley},
TITLE = {Convergence of Probability Measures},
EDITION = {2},
PUBLISHER = {Wiley},
YEAR = {1999},
ISBN = {9780471197454},
DOI = {0.1002/9780470316962}
}

@book{kallenberg_foundations_of_modern_probability,
AUTHOR = {O. Kallenberg},
TITLE = {Foundations of Modern Probability},
series = {Probability Theory and Stochastic Modelling},
EDITION = {3},
PUBLISHER = {Springer Cham},
YEAR = {2021},
ISBN = {978-3-030-61870-4},
ISSN = {2199-3130},
DOI = {10.1007/978-3-030-61871-1}
}

@book{grafakos_classicall_fourier_analysis,
AUTHOR = {L. Grafakos},
TITLE = {Classical Fourier Analysis},
series = {Graduate Texts in Mathematics},
EDITION = {3},
PUBLISHER = {Springer New York},
YEAR = {2014},
ISBN = {978-1-4939-1193-6},
ISSN = {0072-5285},
DOI = {10.1007/978-1-4939-1194-3}
}

@book{doukhan_oppenheim_taqqu_theory_and_applications_of_long_range_dependence,
AUTHOR = {P. Doukhan and G. Oppenheim and M. S. Taqqu},
TITLE = {Theory and Applications of Long-Range Dependence},
EDITION = {1},
PUBLISHER = {Birkhäuser},
YEAR = {2002},
ISBN = {978-0-8176-4168-9}
}

@article{dangEstimationMultifractionalFunction2020,
    title = {Estimation of the multifractional function and the stability index of linear multifractional stable processes},
    volume = {24},
    copyright = {© EDP Sciences, SMAI 2020},
    issn = {1262-3318},
    url = {https://www.esaim-ps.org/articles/ps/abs/2020/01/ps180095/ps180095.html},
    doi = {10.1051/ps/2019012},
    language = {en},
    urldate = {2026-07-13},
    journal = {ESAIM: Probability and Statistics},
    publisher = {EDP Sciences},
    author = {Dang, Thi-To-Nhu},
    year = {2020},
    pages = {1--20},
}

@article{ayacheLinearMultifractionalStable2015,
    title = {Linear {Multifractional} {Stable} {Motion}: {Wavelet} {Estimation} of {H}(·) and $\alpha$ {Parameters}},
    volume = {55},
    issn = {1573-8825},
    shorttitle = {Linear {Multifractional} {Stable} {Motion}},
    url = {https://doi.org/10.1007/s10986-015-9272-1},
    doi = {10.1007/s10986-015-9272-1},
    language = {en},
    number = {2},
    urldate = {2026-07-13},
    journal = {Lithuanian Mathematical Journal},
    author = {Ayache, Antoine and Hamonier, Julien},
    month = apr,
    year = {2015},
    pages = {159--192},
}

\end{document}